\documentclass[a4paper, 12pt, reqno]{amsart}

\usepackage[utf8]{inputenc}
\usepackage[T1,T2A]{fontenc}
\usepackage[english]{babel}
\usepackage{xcolor}
\usepackage{amscd, amsfonts, amsmath, amssymb, amsthm}
\usepackage[shortlabels]{enumitem}
\setlist[enumerate]{label=\normalfont{(\arabic*)}}
\usepackage[colorlinks=true, linkcolor=blue, citecolor=blue]{hyperref}
\usepackage{stmaryrd}
\usepackage{tikz-cd}
\usepackage[all]{xy}

\usepackage{euscript}
\usepackage{makecell}
\usepackage{xifthen}

\newcommand{\gen}[1]{\left\langle #1 \right\rangle}
\newcommand{\insertText}[2][]{%
    \ifthenelse{\isempty{#1}}{%
        \quad \text{#2} \quad%
    }{%
        #1 \text{#2} #1%
    }%
}
\newcommand{\pairing}[2]{\left\langle #1, #2 \right\rangle}

\DeclareMathOperator{\Aut}{Aut}
\DeclareMathOperator{\Cl}{Cl}

\DeclareMathOperator{\cone}{cone}
\DeclareMathOperator{\Eff}{Eff}
\DeclareMathOperator{\Hom}{Hom}
\DeclareMathOperator{\Ker}{Ker}

\DeclareMathOperator{\PSL}{PSL}
\DeclareMathOperator{\rk}{rk}
\DeclareMathOperator{\SAut}{SAut}
\DeclareMathOperator{\SL}{SL}
\DeclareMathOperator{\Spec}{Spec}

\renewcommand{\AA}{\mathbb{A}}
\renewcommand{\ge}{\geqslant}
\renewcommand{\le}{\leqslant}

\newcommand{\AAA}{\mathcal{A}}
\newcommand{\CCC}{\mathcal{C}}
\newcommand{\GG}{\mathbb{G}}
\newcommand{\Ga}{\mathbb{G}_\mathrm{a}}
\newcommand{\Gm}{\mathbb{G}_\mathrm{m}}
\newcommand{\HH}{\mathbb{H}}
\newcommand{\III}{\mathcal{I}}
\newcommand{\KK}{\mathbb{K}}

\newcommand{\NNN}{\mathcal{N}}
\newcommand{\OO}{\mathbb{O}}
\newcommand{\OOO}{\mathcal{O}}
\newcommand{\OOOO}{\mathfrak{O}}
\newcommand{\PP}{\mathbb{P}}
\newcommand{\QQ}{\mathbb{Q}}
\newcommand{\reg}{\mathrm{reg}}
\newcommand{\RRR}{\mathcal{R}}
\newcommand{\TT}{\mathbb{T}}
\newcommand{\VVV}{\mathcal{V}}
\newcommand{\WWW}{\mathcal{W}}
\newcommand{\ZZ}{\mathbb{Z}}

\theoremstyle{definition}
\newtheorem{definition}{Definition}
\newtheorem{example}{Example}

\theoremstyle{plain}
\newtheorem{corollary}{Corollary}
\newtheorem{lemma}{Lemma}
\newtheorem{problem}{Problem}
\newtheorem{proposition}{Proposition}
\newtheorem{theorem}{Theorem}

\theoremstyle{remark}
\newtheorem{remark}{Remark}

\begin{document}


\date{}
\title[Toric varieties and Gale duality]{Automorphisms of toric varieties and Gale duality}

\author{Ivan Arzhantsev}
\address{Faculty of Computer Science, HSE University, Pokrovsky Boulevard 11, Moscow, 109028 Russia}
\email{arjantsev@hse.ru}

\author{Kirill Shakhmatov}
\address{Faculty of Computer Science, HSE University, Pokrovsky Boulevard 11, Moscow, 109028 Russia}
\email{kshahmatov@hse.ru}

\thanks{The work is supported by the RSF grant 25-11-00302.}

\subjclass[2020]{Primary 14J50, 14M25; \ Secondary 20K21, 52B35}
\keywords{Toric variety, polyhedral fan, invariant divisor, automorphism, Demazure root}


\begin{abstract}
We classify complete toric threefolds $X$ such that the automorphism group $\Aut(X)$ acts on $X$ with an open orbit whose complement does not contain a divisor. The latter condition means that for any ray $\rho$ of the fan $\Sigma_X$ there is a Demazure root of $\Sigma_X$ associated with $\rho$. We also find among these varieties those $X$ for which the group $\Aut(X)$ is transitive on the smooth locus $X^\reg$. The classifications are based on Gale-dual interpretations of these properties.
\end{abstract}

\maketitle


\section{Introduction}
\label{sec1}

We consider algebraic varieties over an algebraically closed field $\KK$ of characteristic zero. We say that a variety $X$ is \emph{uniform} if the automorphism group $\Aut(X)$ acts on the smooth locus $X^\reg$ transitively. Clearly, the set $X^\reg$ is the maximal possible orbit of the group $\Aut(X)$ in $X$. The class of uniform varieties contains the class of \emph{homogeneous} varieties, i.e., algebraic varieties $X$ such that $\Aut(X)$ is transitive on $X$. The latter class includes homogeneous spaces $G / H$ of an algebraic group $G$, but in fact it is much broader.

Let us say that an algebraic variety $X$ is \emph{homogeneous in codimension $k$} if the group $\Aut(X)$ acts on $X$ with an open orbit $\OOOO$ and the codimension of the complement to $\OOOO$ in $X$ is at least $k + 1$. Any normal uniform variety is homogeneous in codimension one. Embeddings of homogeneous spaces with small boundary~\cite{AH} provide other examples of varieties that are homogeneous in codimension one. Further, if an algebraic group $G$ acts on a variety $X$ with an open orbit $\OOO$, then $X$ is homogeneous in codimension one if and only if any prime divisor $D$ in the complement to $\OOO$ can be moved by an automorphism of $X$ in such a way that the image of $D$ intersects $\OOO$.

The class of uniform varieties includes flexible quasiaffine varieties in the sense of~\cite{AFKKZ-1, FKZ}. In this case the group of special automorphisms $\SAut(X)$ acts on the smooth locus $X^{\reg}$ infinitely transitively. It follows from~\cite{AKZ} that any quasiaffine toric variety is uniform; see~\cite[Theorem~4.3]{ASZ} for details. Recently, it was proved that any non-degenerate affine spherical variety is flexible~\cite{Sh1} and so uniform; see~\cite{GSh, Sh} for earlier results.

It is known that the only homogeneous complete toric varieties are products of projective spaces~\cite[Theorem~3.9]{Ba}. The problem of classification of non-complete homogeneous toric varieties is much more complicated; see~\cite{Ar, AG}.

Notice that the automorphism group $\Aut(X)$ of a ``typical'' complete toric variety $X$ coincides with the acting torus $T$. This follows from the description of the group $\Aut(X)$ given in~\cite{Cox, De}. In particular, the open orbit $\OOOO$ of the group $\Aut(X)$ in this case coincides with the open $T$-orbit $\OOO$, and its complement contains divisors. 

The aim of this paper is to classify homogeneous in codimension one complete toric varieties up to dimension~3. We also find all uniform varieties among them. Since the only complete toric curve is the projective line $\PP^1$, we start with dimension~2.

\begin{proposition} \label{prpr}
Complete toric surfaces that are homogeneous in codimension one form the following list:
$$
\PP^1 \times \PP^1
\insertText{and}
\PP(1, 1, a), \ a \ge 1,
$$
where $\PP(a, b, c)$ is the weighted projective plane with weights $a, b, c$. All these surfaces are uniform.
\end{proposition}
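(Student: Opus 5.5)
We translate the condition into Demazure roots. For a complete toric variety $X$ with fan $\Sigma$ in $N\cong\ZZ^2$ and primitive ray generators $p_1,\dots,p_m$, a Demazure root associated with $p_i$ is some $e\in M$ with $\pairing{p_i}{e}=-1$ and $\pairing{p_j}{e}\ge 0$ for $j\ne i$. By the Cox--Demazure description, $\Aut(X)^0$ is generated by $T$ and the root subgroups $U_e$. The orbit of $U_e$ through a point of the divisor $D_{\rho_i}$ leaves $D_{\rho_i}$. Hence $X$ is homogeneous in codimension one if and only if every ray admits a root; this is the reformulation stated in the abstract. The only extra point is that finite-order automorphisms cannot move $D_{\rho}$ into $\OOO$, since they normalize $T$ up to the identity component and permute the $T$-invariant divisors. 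So the task is to classify complete fans in $\ZZ^2$ in which every ray has a root.

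\textbf{Classification.} Fix a ray $p_i$ and a root $e$ for it. The line $\{\pairing{\cdot}{e}=0\}$ together with $p_i$ forces all the other generators into the closed half-plane $\pairing{\cdot}{e}\ge 0$. Completeness of the fan then requires generators on both sides of this line, that is, on the line itself or strictly positive. Since all $p_j$ with $j\neq i$ lie in a closed half-plane, $p_i$ is the unique generator with $\pairing{\cdot}{e}<0$.

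Now take the generators in cyclic order. For each $i$, the remaining $m-1$ generators lie in a closed half-plane whose boundary line is not the line through $p_i$, but which misses $p_i$. I will argue that $m\le 4$, and this counting step is the main obstacle.

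\begin{itemize}
\item If $m\ge 5$, the four generators other than $p_i$ and one neighbour must all sit in a half-plane.
\item Doing this for each $i$ should force two opposite pairs together with a contradiction.
\end{itemize}

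I would make this rigorous as follows. For each $i$, the ray $-p_i$ must lie in the cone spanned by the two lines' worth of boundary generators. A cleaner route is to note that the half-plane $\pairing{\cdot}{e}\ge0$ contains $m-1$ generators and its boundary line contains at most two of them. The generators strictly inside lie in an open half-plane, while completeness of $\{p_j\}$ needs positive combinations summing to zero. Running this over all $i$ should give $m\le4$.

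For $m=3$, the fan is that of a weighted projective plane $\PP(a,b,c)$ with $ap_1+bp_2+cp_3=0$. A root for $p_1$ with $\pairing{p_2}{e}=0$ and $\pairing{p_3}{e}>0$ requires $p_2,p_3$ to span together with $p_1$ in a compatible way. Working this out, a root exists for $p_i$ iff $\gcd$-conditions force two of the weights to equal $1$, giving $\PP(1,1,a)$. For $m=4$, each ray needs its own root, and the boundary lines force $p_3=-p_1$ and $p_4=-p_2$. The roots then force $p_1,p_2$ to be a basis, so $X\cong\PP^1\times\PP^1$.

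\textbf{Uniformity and the converse.} $\PP^1\times\PP^1$ is homogeneous. For $\PP(1,1,a)$ with $a\ge2$, the singular locus is the single point corresponding to the cone spanned by the two weight-one directions. Its complement is the total space of the line bundle $\OOO(a)$ over $\PP^1$, on which $\GL_2$ acts, together with the unipotent root subgroups moving the section at infinity. Checking that all points of $X^\reg$ lie in one orbit is a direct computation with the explicit roots. Conversely, exhibiting explicit roots for each ray of these fans gives homogeneity in codimension one.
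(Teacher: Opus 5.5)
Your overall strategy matches the paper's: reduce homogeneity in codimension one to the existence of a Demazure root for every ray (Proposition~\ref{codim2}), classify complete suitable configurations in $\ZZ^2$ (Proposition~\ref{use}), then check uniformity.

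The reduction via Demazure's generation of $\Aut(X)^0$ by $T$ and root subgroups is acceptable for complete $X$. For the ``only if'' direction, though, you should state explicitly that a root subgroup attached to $\rho_j\ne\rho_i$ fixes the Cox coordinate $x_i$, and so preserves $D_{\rho_i}$. Your transitivity check on $\PP(1,1,a)^{\reg}$ uses the graded automorphisms $(x_0,x_1,y)\mapsto(g(x_0,x_1),\lambda y+f(x_0,x_1))$ with $\deg f=a$. This is a legitimate substitute for Corollary~\ref{uniform}.

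The gap is in the classification, which is the actual content of the statement.

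First, the bound $m\le 4$ is never proved. You call it the main obstacle and then offer only heuristics ending in ``should''. A short argument:
\begin{itemize}
\item Order the rays cyclically.
\item A root for $p_i$ gives $p_i\notin\cone(p_j : j\ne i)$, in particular $p_i\notin\cone(p_{i-1},p_{i+1})$. So the angle from $p_{i-1}$ to $p_{i+1}$ passing through $p_i$ is at least $\pi$.
\item Summing over $i$ counts each of the $m$ gaps twice, so $m\pi\le 4\pi$.
\item For $m=4$, equality forces $p_{i+1}=-p_{i-1}$, which is what your $m=4$ step silently uses.
\end{itemize}
The paper gets this from Lemma~\ref{lem11}: the Gale dual is $\{f_1^{(3)}\}$ or $\{f_1^{(2)},f_2^{(2)}\}$.

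Second, your case $m=3$ starts from a false claim. A complete fan with three rays is in general only a \emph{fake} weighted projective plane, because $p_1,p_2,p_3$ may generate a proper sublattice of $N$. For example, the rays $(2,-1),(-1,2),(-1,-1)$ give $\PP^2/\mu_3$ with $\Cl(X)\cong\ZZ\oplus\ZZ/3\ZZ$, and this must be excluded. Ruling out torsion in $\Cl(X)$ is exactly what the torsion part of the proof of Proposition~\ref{SEC.le} does, and your unspecified ``gcd-conditions'' do not address it.

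It can be done directly:
\begin{itemize}
\item Write $ap_1+bp_2+cp_3=0$ with $0<a\le b\le c$ and $\gcd(a,b,c)=1$.
\item A root $e$ for $p_1$ gives $a=b\pairing{p_2}{e}+c\pairing{p_3}{e}$. Since $b,c\ge a$, $e$ vanishes on some $p_k$ with $k\in\{2,3\}$.
\item Since $\pairing{p_1}{e}=-1$ and $p_k$ is primitive, $\{p_1,p_k\}$ is a basis of $N$.
\item Only now is $X\cong\PP(a,b,c)$ with $\Cl(X)\cong\ZZ$.
\item Requiring each weight to lie in the monoid generated by the other two gives $a=b$ and $a\mid c$, hence $a=b=1$.
\end{itemize}
With these two steps filled in, your $m=4$ argument finishes the classification: a root for $p_1$ must vanish on $\pm p_2$, so $\{p_1,p_2\}$ is a basis.
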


The case of toric threefolds is more diverse. 

\begin{theorem} \label{thth} 
Complete toric threefolds that are homogeneous in codimension one form the following list:
$$
\PP(1, 1, a, b), \  1 \le a < b; \quad
\PP(a, a, b, b) \insertText[\ ]{and} \ \PP(a,a,b,b)[d], \
1 \le a \le b, \ (a, b) = 1, \ d \ge 2;
$$
$$
\PP^1 \times \PP(1, 1, a), \ 1 \le a; \quad
\PP^1 \times \PP^1 \times \PP^1;
$$
$$
Y(a, b), \ Y'(a,b), \ 1 \le a \le b; \quad
Y''(a, b), \ 1 \le a < b; 
$$
where $\PP(a,b,c,d)$ denotes a weighted projective space and $\PP(a,a,b,b)[d]$ is a fake weighted projective space; 
see Claims~1--2 below for notation. Among these varieties, non-uniform varieties are precisely
$\PP(1, 1, a, b)$ with $1 < a < b$, $(a, b)=1$ and
$Y'(1,b)$ with $b \ge 1$.
Moreover, $Y'(1, b)$, $b \ge 1$, which are the projective bundles $\PP(\OOO_{\PP^1}\oplus\OOO_{\PP^1}\oplus\OOO_{\PP^1}(b))$, 
are the only smooth non-uniform homogeneous in codimension one complete toric threefolds.
\end{theorem}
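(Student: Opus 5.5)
Our plan is to translate both properties into the combinatorics of the fan $\Sigma = \Sigma_X$ and then pass to the Gale dual configuration, where the classification becomes a finite case analysis. Let $\rho_1,\dots,\rho_n$ be the rays with primitive generators $p_i \in N \cong \ZZ^3$. Recall from~\cite{Cox, De} that a Demazure root of $\Sigma$ associated with $\rho_i$ is a vector $e \in M$ with $\langle p_i, e\rangle = -1$ and $\langle p_j, e\rangle \ge 0$ for $j \ne i$. The unipotent part of $\Aut(X)$ is generated by root subgroups, and the root subgroup for a root associated with $\rho_i$ moves the divisor $D_i$ into the open $T$-orbit. Hence $X$ is homogeneous in codimension one if and only if every ray carries a Demazure root; this is the reformulation stated in the abstract. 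In Gale dual language, let $\deg D_i = w_i \in \Cl(X)$. A root associated with $\rho_i$ is exactly a principal divisor of the form $\sum_{j\ne i} a_j D_j - D_i$ with $a_j \ge 0$. So the condition says that, for each $i$, the class $w_i$ lies in the semigroup generated by the $w_j$ with $j \ne i$; equivalently, the monoid of effective classes is generated by $\{w_j\}_{j\ne i}$ for every $i$.

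\textbf{Step 1: bound the Picard rank.} Completeness forces the weights $w_i$ to lie in a pointed cone $\Eff(X)$ of dimension $r = n - 3$. We will show that every extremal ray of $\Eff(X)$ must contain at least two of the $w_i$. The reason is that the $w_i$ on an extremal ray can only be written as combinations of other weights lying on that same ray. We then argue that $r \le 3$. The guiding principle is that at $r = 3$ each of the three extremal rays carries a pair of weights, which gives the $\PP^1\times\PP^1\times\PP^1$ pattern. We expect that further weights off the extremal rays, or extra weights on a ray, produce fans that are either non-complete or violate a root condition. For $r=1$ the condition reads: every weight lies in the semigroup generated by the others. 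Together with the torsion of $\Cl(X)$, this gives $\PP(1,1,a,b)$ (where $a$ and $b$ are generated by $1,1$) and $\PP(a,a,b,b)$ together with its fake quotients $\PP(a,a,b,b)[d]$. The pair $\{a,a\}$ covers $a$ and $\{b,b\}$ covers $b$ only when the torsion is compatible, and this is what has to be checked. For $r=2$ we get two extremal rays with at least two weights each and five weights in total. The fifth weight lies either on an extremal ray or in the interior, and the root conditions cut this down to $\PP^1\times\PP(1,1,a)$ and the families $Y, Y', Y''$ defined in Claims~1--2.

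\textbf{Step 2: check that the fans exist.} For each surviving Gale configuration, we reconstruct the complete fans (via bunches of cones / chamber choices) and verify that the listed varieties indeed have roots for every ray. The surface case, Proposition~\ref{prpr}, serves as the model computation.

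\textbf{Step 3: uniformity.} $\Aut(X)$ is generated by $T$ and the root subgroups. Since $X$ is homogeneous in codimension one, we only need to check whether the $\Aut(X)$-orbit of the open set reaches the smooth $T$-orbits of codimension $\ge 2$, i.e. the orbits $O(\sigma)$ for smooth cones $\sigma$ of dimension $2,3$. A criterion in the spirit of~\cite{AKZ, ASZ} is that $O(\sigma)$ is moved into a larger orbit exactly when some root $e$ associated with a ray $\rho$ of $\sigma$ satisfies $\langle p_j, e\rangle = 0$ for the other rays of $\sigma$. We check this cone by cone. For $\PP(1,1,a,b)$ with $1<a<b$ coprime, the relevant smooth torus-fixed point, or smooth curve orbit, admits no such root. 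For $Y'(1,b) = \PP(\OOO\oplus\OOO\oplus\OOO(b))$, the section corresponding to the negative summand gives a smooth invariant curve fixed by $\Aut(X)$. Smoothness of the remaining list is then read off directly.

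The main obstacle is Step 1, specifically the exhaustive $r=2$ and $r=3$ analysis. Excluding configurations requires combining the semigroup conditions with completeness and with the torsion part of $\Cl(X)$. To keep this manageable, we will work chamber by chamber in the Gale dual picture.
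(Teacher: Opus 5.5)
Your outline follows the paper's route: a Demazure root on every ray, i.e.\ each $[D_i]$ lies in the monoid generated by the other classes; at least two weights on each extremal ray of $\Eff(X)$; case analysis in $r=\rk\Cl(X)$; fans via bunches of cones; and uniformity via root moves between $T$-orbits. However, the step that actually produces the list is missing, and you have put the difficulty in the wrong place. Over $\QQ$ nothing needs to be excluded, and no chamber or completeness argument is needed. There are $r+3$ weights and at least two on each of the $\ge r$ extremal rays, so $2r\le r+3$. For $r=3$ this forces $\{\epsilon_1^{(2)},\epsilon_2^{(2)},\epsilon_3^{(2)}\}$ with no further weights. For $r=2$ it forces two extremal rays with two weights each, plus a fifth weight anywhere in the cone; the root conditions do not restrict that fifth weight. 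The real content is the integral analysis in $A=\Cl(X)$, which you only gesture at. Admissibility must be used in $A$ itself. For instance, if $w,w'$ are the two weights on an extremal ray, then $w=kw'$ and $w'=k'w$ with $k,k'\ge 1$, so $(kk'-1)w=0$ and $w=w'$ \emph{including their torsion parts}. Arguments of this kind use two further facts: $\AAA$ generates $A$, and every $\AAA\setminus\{a_i\}$ generates $A$ (this is primitivity of $p_i$, Remark~\ref{remus}). They show three things. First, $\Cl(X)$ is free with the doubled weights as a basis for $r=2,3$ (Proposition~\ref{SEC.le}). Second, $\{1^{(2)},a,b\}$ admits no torsion, since all four weights then lie in the cyclic subgroup generated by the common lift of $1$. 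Third, $\{a^{(2)},b^{(2)}\}$ admits at most one cyclic factor $\ZZ/d\ZZ$, with $a\bar v-b\bar u$ a unit and the isomorphism type independent of $(\bar u,\bar v)$; this is why $\PP(a,a,b,b)[d]$ is a single variety for each $(a,b,d)$. For $A=\ZZ$ you also need the arithmetic step: the minimal weight occurs twice, and if it exceeds $1$ the other two weights coincide and are coprime to it. Similarly, Step~2 must show that configuration~(5) of Corollary~\ref{3DCSC.co} carries exactly three complete fans and that $Y'(a,a)\cong Y''(a,a)$; this is why $Y''(a,b)$ is listed only for $a<b$.

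Step~3 has a gap in the non-uniformity direction. Theorem~\ref{teo2} only describes single moves. If no root moves a smooth orbit $O(\sigma)$ up to a facet orbit, $O(\sigma)$ might still be joined to a smaller orbit $O(\tau)$ with $\sigma$ a facet of $\tau$, and from there to a larger orbit through another facet of $\tau$. You need an $\Aut(X)$-invariant obstruction. The paper uses $\Gamma(x)$: by Theorem~\ref{main} and Corollary~\ref{uniform}, a smooth orbit $O(\sigma)$ lies in $\OOOO$ if and only if $\Gamma(\sigma)=A_+$, and then uniformity of each listed variety is a direct inspection of the multisets in Theorem~\ref{3DCAM.th}. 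The same invariant also gives the ``only if'' part of your codimension-one reformulation. On your route you must instead add that, by Theorem~\ref{teo2}, root subgroups not associated with $\rho_i$ preserve $D_i$, and that $\Aut(X)$ and its identity component have the same open orbit. Moreover, in $\PP(1,1,a,b)$ the smooth fixed points are not the obstruction: for them $\Gamma=\langle 1\rangle=A_+$, so they lie in $\OOOO$. The culprit is the curve on which both weight-one coordinates vanish. Its open $T$-orbit consists of smooth points because $(a,b)=1$, but its $\Gamma$ is the monoid generated by $a$ and $b$, which does not contain $1$ when $a>1$. For $Y'(1,b)$ your rigid-curve argument works, but the curve is $D_1\cap D_4$, i.e.\ the section $\PP(\OOO_{\PP^1}(b))$ with normal bundle $\OOO_{\PP^1}(-b)^{\oplus 2}$ contracted by $\varphi'$. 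In the paper's convention there is no ``negative summand'' in $\OOO_{\PP^1}\oplus\OOO_{\PP^1}\oplus\OOO_{\PP^1}(b)$.
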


The case of the varieties $\PP(\OOO_{\PP^1}\oplus\OOO_{\PP^1}\oplus\OOO_{\PP^1}(b))$ motivates the following problem.

\begin{problem} \label{kuz}
Classify all smooth complete toric varieties that are homogeneous in codimension one.
\end{problem}

Notice that in Theorem~\ref{thth} the fake weighted projective spaces $\PP(a, a, b, b)[d], \ d \ge 2$, are the only varieties with torsion in the divisor class group, and $Y(a,b)$ are the only non-simplicial varieties. All varieties in Theorem~\ref{thth} are projective.

\smallskip

Our classification results are based on Gale duality. In Sections~\ref{sec2}-\ref{sec3} we recall basic facts on the Gale transform and its lattice version, respectively. Section~\ref{sec4} provides classification results in terms of Gale dual configurations for further use. In Section~\ref{sec5} we turn to toric geometry. We recall basic facts on toric varieties including the concepts of root subgroups in the automorphism group and Demazure roots of the associated fan. The main results concerning these concepts are summarized in Theorems~\ref{teo1} and~\ref{teo2}. For the convenience of the reader, we give short proofs of these results in terms of Cox rings in the Appendix. In Section~\ref{sec6}, for a toric variety, we interpret the properties of being uniform and homogeneous in codimension one in terms of Gale duality. In Section~\ref{sec7} we apply this interpretation and classifications from Section~\ref{sec4} to prove Proposition~\ref{prpr} and Theorem~\ref{thth}. In Section~\ref{sec8} we give a description of complete toric varieties with reductive automorphism group (Proposition~\ref{pred}). This property is important because of the problem of existence of a K\"ahler-Einstein metric; see~\cite{Nill}.

\smallskip

The authors are grateful to Alexander Kuznetsov for stimulating questions. In particular, Problem~\ref{kuz} was formulated as such a question.


\section{Gale transform and sparse configurations}
\label{sec2}


We begin with linear Gale duality. The presentation follows \cite[Section~2.2.1]{ADHL}; see also \cite{OP}, \cite[Section~6.4]{Zie}, \cite[Section~14.3]{CLS}, and~\cite{Pan} for a recent treatment. All vector spaces below are finite-dimensional vector spaces over the field of rational numbers $\QQ$. By a \emph{vector configuration} in a vector space $V$ we mean a finite multiset of vectors $\{v_1, \ldots, v_m\}$ in $V$ that spans the space $V$. A vector configuration $\VVV = \{v_1, \ldots, v_m\}$ in a vector space $V$ and a vector configuration $\WWW=\{w_1, \ldots, w_m\}$ in a vector space $W$ are called \emph{Gale dual} if the following conditions hold:
\begin{enumerate}
\item[(i)]
$v_1 \otimes w_1 + \ldots + v_m \otimes w_m = 0$ in
$V \otimes W$;


\item[(ii)]
for any vector space $U$ and any vectors $u_1, \ldots, u_m \in U$ with
$$
v_1 \otimes u_1 + \ldots + v_m \otimes u_m = 0
\insertText{in}
V \otimes U,
$$
there is a unique linear map $\psi \colon W \to U$ with $\psi(w_i) = u_i$ for $i = 1, \ldots, m$;


\item[(iii)]
for any vector space $U$ and any vectors $u_1, \ldots, u_m \in U$ with
$$
u_1 \otimes w_1 + \ldots + u_m \otimes w_m = 0
\insertText{in}
U \otimes W,
$$
there is a unique linear map $\phi \colon V \to U$ with $\phi(v_i) = u_i$ for $i = 1, \ldots, m$.
\end{enumerate}

If we fix one configuration in a Gale dual pair, then the other one is determined up to isomorphism. Therefore one configuration is called the \emph{Gale transform} of the other.

Consider vector configurations $\VVV = \{v_1, \ldots, v_m\}$ and $\WWW = \{w_1, \ldots, w_m\}$ in vector spaces $V$ and $W$ respectively, and let $V^*$ be the dual vector space of $V$. Then Gale duality of $\VVV$ and $\WWW$ is characterized by the following property: for any tuple $(\alpha_1, \ldots, \alpha_m)\in \QQ^m$ one has
$$
\alpha_1 w_1 + \ldots + \alpha_m w_m = 0
\iff
l(v_i) = \alpha_i
\insertText[\ \ ]{for} i = 1, \ldots, m
\insertText[\ \ ]{with some} l \in V^*.
$$

Let us present a construction which produces the Gale dual for a configuration $\VVV =$ \linebreak
$= \{v_1, \ldots, v_m\}$ in a space $V$. Take the vector space $\QQ^m$ and consider a surjective linear map $\alpha \colon \QQ^m \to V$ given on the standard basis $e_1, \ldots, e_m$ in $\QQ^m$ by $\alpha(e_i) = v_i$, $i = 1, \ldots, m$.

Consider two mutually dual short exact sequences of vector spaces
\begin{equation}
\begin{tikzcd}[trim left=(a), trim right=(a)]
0 \ar{r} & [0.75em] \Ker(\alpha) \ar{r} & [1.3em]
|[alias=a]| \QQ^m \ar{r}{\alpha} & [0.6em]
V \ar{r} & 0
\end{tikzcd}
\end{equation}
and
\begin{equation}
\begin{tikzcd}[trim left=(a), trim right=(a)]
0 & \big( \Ker(\alpha) \big)^* \ar{l} &
|[alias=a]| (\QQ^m)^* \ar[swap]{l}{\beta} &
V^* \ar{l} & 0 \ar{l}.
\end{tikzcd}
\end{equation}
Let $e_1^*, \ldots, e_m^*$ be the basis in $(\QQ^m)^*$ dual to the basis $e_1, \ldots, e_m$ in $\QQ^m$. Letting $W =$ \linebreak
$= \big( \Ker(\alpha) \big)^*$ and $w_i = \beta(e_i^*)$ for $i = 1, \ldots, m$, we obtain the Gale dual configuration $\WWW = \{w_1, \ldots, w_m\}$. In particular, we have $\dim V + \dim W = m$.

\medskip

Now let us apply linear Gale duality to some specific vector configurations.

\begin{definition}
A vector configuration $\VVV = \{v_1, \ldots, v_m\}$ in a vector space $V$ is called \emph{sparse} if for any $v_i \in \VVV$ there is a linear function $l_i \in V^*$ such that
$$
l_i(v_i) < 0 \insertText{and} l_i(v_j) \ge 0 \quad
\forall j \ne i.
$$
\end{definition}

Equivalently, any $v_i \in \VVV$ is not contained in $\cone(v_1, \ldots, \widehat{v}_i, \ldots, v_m)$, where for vectors $u_1, \ldots, u_s \in V$ we define
$$
\cone(u_1, \ldots, u_s) = \{
    \lambda_1 u_1 + \ldots + \lambda_s u_s \mid
    \lambda_i \in \QQ_{\ge 0} \ \forall i = 1, \dots, s
\}.
$$
The property of a configuration $\VVV$ to be sparse does not change if we multiply the vectors in $\VVV$ by positive scalars. In other words, sparseness is a property of a collection of rays generated by vectors in a vector configuration. Note that $\VVV$ is sparse if and only if for its Gale dual configuration $\WWW = \{w_1, \dots, w_m\}$ one has $w_i \in \cone(w_1, \dots, \widehat{w}_i, \dots, w_m)$ for all $w_i \in \WWW$.

\begin{definition}
A vector configuration $\VVV = \{v_1, \ldots, v_m\}$ in a vector space $V$ is called \emph{complete} if $\cone(v_1, \ldots, v_m) = V$.
\end{definition} 

Let us reformulate the completeness condition of a vector configuration $\VVV = \{v_1, \ldots, v_m\}$ in a vector space $V$ as follows: if for a function $l \in V^*$ one has $l(v_i) \ge 0$ for all $v_i \in \VVV$, then $l = 0$. Therefore, $\VVV$ is complete if and only if the implication
$$
\lambda_1 w_1 + \ldots + \lambda_m w_m = 0
\insertText[\ \ ]{for some}
\lambda_1, \dots, \lambda_m \in \QQ_{\ge 0}
\implies
\lambda_1 = \ldots = \lambda_m = 0
$$
holds for its Gale dual configuration $\WWW = \{w_1, \dots, w_m\}$. The latter is equivalent to saying that $w_i \ne 0$ for all $w_i \in \WWW$ and $\cone(w_1, \ldots, w_m)$ is strictly convex. We call such configurations $\WWW$ \emph{strictly convex}.

\begin{lemma} \label{lem11}
A complete configuration $\VVV$ is sparse if and only if any one-dimensional face of $\cone(w_1, \ldots, w_m)$ contains at least two elements of $\WWW$.
\end{lemma}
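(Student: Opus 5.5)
We use the two dual reformulations established above: $\VVV$ is sparse if and only if $w_i \in \cone(w_1, \ldots, \widehat{w}_i, \ldots, w_m)$ for every $i$, and $\VVV$ is complete if and only if $\WWW$ is strictly convex, meaning all $w_i \ne 0$ and $C = \cone(w_1, \ldots, w_m)$ is pointed. Thus we may assume $\WWW$ is strictly convex and must show that each $w_i$ lies in the cone of the others if and only if every ray of $C$ contains at least two elements of $\WWW$. Here two elements means two indices; the configuration is a multiset, so equal vectors count separately.

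\emph{Necessity.} Let $\rho$ be a one-dimensional face of $C$. Since $C$ is generated by the $w_j$, the ray $\rho$ contains some $w_i$, and $w_i \ne 0$. By sparseness, $w_i = \sum_{j \ne i} \lambda_j w_j$ with $\lambda_j \ge 0$. Choose a supporting functional $h$ of the face $\rho$, so that $h \ge 0$ on $C$ and $\rho = C \cap \{h = 0\}$. Then $0 = h(w_i) = \sum_{j\ne i} \lambda_j h(w_j)$, and each term is nonnegative, so every $w_j$ with $\lambda_j > 0$ lies in $\rho$. Not all $\lambda_j$ vanish because $w_i \ne 0$. Hence some $w_j$ with $j \ne i$ lies in $\rho$, giving two elements of $\WWW$ on $\rho$.

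\emph{Sufficiency.} Fix $i$. Since $C$ is pointed, it is generated by its extremal rays. Decompose $w_i = \sum_\rho u_\rho$ with $u_\rho \in \rho$ over the rays of $C$. Each ray $\rho$ contains at least two elements of $\WWW$, so it contains some $w_{j(\rho)}$ with $j(\rho) \ne i$, and $w_{j(\rho)} \ne 0$. Then $u_\rho$ is a nonnegative multiple of $w_{j(\rho)}$, and therefore $w_i \in \cone(w_j : j \ne i)$. By the dual characterization, this shows $\VVV$ is sparse.

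The only point needing care is the use of pointedness. We need it to say that $C$ is generated by its one-dimensional faces, which is a standard fact for strictly convex polyhedral cones. We also need that every ray of $C$ contains some $w_j$, which holds because an extremal ray of a cone generated by finitely many vectors must contain one of the generators. No other obstacle arises.
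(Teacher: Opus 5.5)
Your proof is correct and follows the same route as the paper. For necessity, you find a second generator on each ray from the relation $w_i \in \cone(w_j : j \ne i)$; for sufficiency, you observe that every ray of the pointed cone is spanned by some $w_j$ with $j \ne i$, so $\cone(w_j : j \ne i)$ is the whole cone. You simply make explicit two facts the paper uses tacitly: the supporting-functional argument, and that a strictly convex cone is generated by its one-dimensional faces.
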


\begin{proof}
Denote
$$
\sigma = \cone(w_1, \ldots, w_m)
\insertText{and}
\sigma_i = \cone(w_1, \ldots, \widehat{w}_i, \ldots, w_m)
$$
for each $i = 1, \dots, m$. As we have seen above, a configuration $\VVV$ is sparse if and only if $w_i \in \sigma_i$ for all $i = 1, \dots, m$. Since $\VVV$ is assumed to be complete, all the vectors $w_i$ are nonzero.

Assume that $\VVV$ is sparse and consider a vector $w_i \in \WWW$ lying on a one-dimensional face of~$\sigma$. From $w_i \in \sigma_i$ it follows that there exists a vector $w_j \in \WWW$, $j \ne i$, which lies on the same one-dimensional face. Conversely, if any one-dimensional face of $\sigma$ contains at least two elements of $\WWW$, then $\sigma_i = \sigma$ for all $i = 1, \dots, m$, and the vector $w_i$ is contained in~$\sigma_i$.
\end{proof}

\begin{remark} \label{rho} 
Lemma~\ref{lem11} implies that if $\dim V = n$ then any complete sparse vector configuration in $V$ contains $\le 2n$ vectors.
\end{remark}

Let us say that two vector configurations $\VVV=\{v_1, \ldots, v_m\}$ and $\VVV'=\{v_1', \ldots, v_m'\}$ in $V$ are \emph{equivalent} if there is an automorphism of the space $V$ that sends, up to renumbering, the ray generated by $v_i$ to the ray generated by $v_i'$ for all $i=1, \ldots, m$.

For small $n$, one can easily find all complete sparse vector configurations up to equivalence. For example, with $n = 2$ we have only
\begin{equation} \label{condim2}
\VVV_1 = \big\{ (1, 0), (0, 1), (-1, -1) \big\}
\insertText{and}
\VVV_2 = \big\{ (1, 0), (0, 1), (-1, 0), (0, -1) \big\}.
\end{equation}
The Gale transforms of these configurations are the multisets $\WWW_1 = \{ f_1^{(3)} \}$ with $\dim W = 1$ and
$\WWW_2 = \{ f_1^{(2)}, f_2^{(2)} \}$ with $\dim W = 2$, respectively. Here we denote by $f_1, \ldots, f_{m - n}$ an appropriate basis in $W$ and the notation $a^{(k)}$ means that an element $a$ is contained in a multiset $\AAA$ with multiplicity $k$.

\begin{proposition} \label{3DCSC.pr}
Up to equivalence, complete sparse vector configurations in a three-dimensional vector space are precisely the configurations
$$
\VVV_1 = \VVV_0 \cup \{(-1, -1, -1)\}, \quad
\VVV_2 = \VVV_0 \cup \{(-1, -1, 0), (0, 0, -1)\},
$$
$$
\VVV_3 = \VVV_0 \cup \{(-1, -1, 0), (0, -1, -1)\}, \quad \VVV_4 = \VVV_0 \cup \{(-1, 0, 0), (0, -1, 0), (0, 0, -1)\},
$$
where $\VVV_0 = \{ (1, 0, 0), (0, 1, 0), (0, 0, 1) \}$.
\end{proposition}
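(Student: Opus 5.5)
We argue on the Gale dual side, using Lemma~\ref{lem11} and Remark~\ref{rho}. Let $\VVV=\{v_1,\dots,v_m\}$ be a complete sparse configuration in $V$ with $\dim V=3$. Completeness forces $m\ge 4$, and Remark~\ref{rho} gives $m\le 6$. Hence the Gale dual $\WWW$ lives in a space $W$ with $k=\dim W=m-3\in\{1,2,3\}$. By the discussion preceding Lemma~\ref{lem11}, $\WWW$ is strictly convex: all $w_i\neq 0$ and $\sigma=\cone(\WWW)$ is pointed. By Lemma~\ref{lem11}, every extremal ray of $\sigma$ carries at least two elements of $\WWW$. Since $\WWW$ spans $W$, the cone $\sigma$ has at least $k$ extremal rays.

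We now classify the possible $\WWW$ up to positive rescaling and linear automorphisms, since equivalence of $\VVV$ corresponds to exactly this freedom on the dual side.

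\emph{Case $k=1$, $m=4$.} All four $w_i$ lie on one ray, so $\WWW=\{f_1^{(4)}\}$. The dual relation is $v_1+v_2+v_3+v_4=0$ after rescaling, which gives $\VVV_1$.

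\emph{Case $k=2$, $m=5$.} The cone $\sigma$ is two-dimensional with two extremal rays, each carrying at least two vectors. The fifth vector lies either on a boundary ray or in the interior. So $\WWW=\{f_1^{(3)},f_2^{(2)}\}$ or $\WWW=\{f_1^{(2)},f_2^{(2)},f_1+f_2\}$, where the interior vector is normalized to $f_1+f_2$ by rescaling the basis.
\begin{itemize}
\item For $\{f_1^{(3)},f_2^{(2)}\}$ the linear relations are generated by $v_1+v_2+v_3=0$ and $v_4+v_5=0$. This is $\VVV_2$: $(1,0,0),(0,1,0),(-1,-1,0)$ together with $(0,0,\pm1)$.
\item For $\{f_1^{(2)},f_2^{(2)},f_1+f_2\}$ the relations are generated by $v_1+v_2+v_5=0$ and $v_3+v_4+v_5=0$, with $v_5$ the vector dual to $f_1+f_2$. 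This should give $\VVV_3$. In $\VVV_3$ take $v_5=(0,1,0)$; then $(1,0,0)+(-1,-1,0)+v_5=0$ and $(0,0,1)+(0,-1,-1)+v_5=0$.
\end{itemize}
To identify each $\VVV$ concretely, we compute the kernel of $\QQ^m\to V$, which is the dual relations space, and match the relation patterns. This matches $\VVV$ up to equivalence, because $\VVV$ is determined by $\WWW$ up to isomorphism.

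\emph{Case $k=3$, $m=6$.} The cone $\sigma$ has at least three extremal rays, each carrying at least two of the six vectors. So there are exactly three rays with two vectors each, $\sigma$ is simplicial, and $\WWW=\{f_1^{(2)},f_2^{(2)},f_3^{(2)}\}$. The dual relations are $v_1+v_2=0$, $v_3+v_4=0$, $v_5+v_6=0$, which gives $\VVV_4$.

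Conversely, each $\VVV_i$ is complete and sparse. One can check this directly, or via Lemma~\ref{lem11} using the duals listed above.

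The main obstacle is the case $k=2$. There we must argue carefully that these two shapes exhaust all possibilities up to rescaling, and that the correct relation space yields exactly $\VVV_3$ rather than something inequivalent. We also need that distinct $\WWW$ give inequivalent $\VVV$. For this we will note that the multiplicity pattern of $\WWW$ on extremal rays is an invariant of the rescaling/automorphism class.
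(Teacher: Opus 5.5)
Your proposal is correct and follows the paper's proof. You classify the strictly convex Gale duals in dimension $m-3\in\{1,2,3\}$ using Lemma~\ref{lem11} and Remark~\ref{rho}, arriving at exactly the paper's $\WWW_1,\dots,\WWW_4$. Your explicit relation-space computations matching these to $\VVV_1,\dots,\VVV_4$, and your inequivalence remark via multiplicity patterns on extremal rays, spell out what the paper leaves as ``clear''.
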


\begin{proof}
We use Lemma~\ref{lem11} and classify dual configurations. It is clear that the only possibilities are
$$
\WWW_1 = \{ f_1^{(4)} \}
\insertText{with}
\dim W = 1,
$$
$$
\WWW_2 = \{ f_1^{(3)}, f_2^{(2)} \}
\insertText{and}
\WWW_3 = \{ f_1^{(2)}, f_2^{(2)}, f_1 + f_2 \}
\insertText{with}
\dim W = 2,
$$
and
$$
\WWW_4 = \{ f_1^{(2)}, f_2^{(2)}, f_3^{(2)} \}
\insertText{with}
\dim W = 3.
$$
\end{proof}

Starting from $\dim V = 4$, we have infinitely many non-equivalent complete sparse vector configurations in $V$. For example, one may take vector configurations dual to
$$
\WWW = \{
    f_1^{(2)}, \ f_2^{(2)}, \ f_1 + f_2, \ f_1 + \lambda f_2
\}
\insertText{with}
\dim W = 2 \insertText{and} \lambda \ge 1.  
$$


\section{Lattice Gale transform}
\label{sec3}


Now we come to a version of Gale duality that takes into account the lattice of integer points in a rational vector space. This construction is introduced in~\cite[Section~5]{Ar}.

Given an abelian group $A$, we denote by $A_\QQ$ the vector space $A \otimes_\ZZ \QQ$ over $\QQ$. By a \emph{lattice} we mean a free finitely generated abelian group.

A \emph{vector configuration} $\NNN$ in a lattice $N$ is a finite multiset of vectors $p_1, \ldots, p_m \in N$ that spans the vector space $N_{\QQ}$. Note that $\NNN$ gives rise to a vector configuration 
$$
\NNN_\QQ = \{ p_1 \otimes 1, \dots, p_m \otimes 1 \}
$$ 
in the vector space $N_{\QQ}$.

Let $\NNN = \{p_1, \ldots, p_m\}$ be a vector configuration in a lattice $N$. Consider the lattice $\ZZ^m$ with the standard basis $e_1, \ldots, e_m$ and an exact sequence of lattices
\begin{equation} \label{seq3}
\begin{tikzcd}
0 \ar{r} & L \ar{r} &
\ZZ^m \ar{r}{\alpha} &
N
\end{tikzcd}
\end{equation}
defined by $\alpha(e_i) = p_i$, $i = 1, \ldots, m$. We identify the dual lattice $\Hom(\ZZ^m, \ZZ)$ with $\ZZ^m$ using the dual basis $e_1^*, \ldots, e_m^*$. Denote $M = \Hom(N, \ZZ)$ and let $N \times M \to \ZZ$, $(p, u) \mapsto \pairing{p}{u}$ be the canonical pairing. The homomorphism $M \to \ZZ^m$ dual to $\alpha$ gives rise to a short exact sequence of abelian groups
\begin{equation} \label{seq4}
\hspace{4.3em}
\begin{tikzcd}
0 & A \ar{l} &
\ZZ^m \ar[swap]{l}{\beta} &
M \ar{l} & 0 \ar{l}.
\end{tikzcd}
\end{equation}
Let $a_i = \beta(e_i^*)$. By construction, the elements $a_1, \ldots, a_m$ generate the abelian group~$A$. We call the multiset $\AAA = \{a_1, \ldots, a_m\}$ the \emph{lattice Gale transform} of the configuration~$\NNN$. By tensoring sequences~(\ref{seq3}) and~(\ref{seq4}) with $\QQ$ we obtain linear Gale duality between $\VVV=\NNN_\QQ$ and $\WWW=\AAA_\QQ$.

Conversely, given a multiset $\{a_1, \ldots, a_m\}$ that generates an abelian group $A$, we can reconstruct sequence~(\ref{seq4}), the lattice $N = \Hom(M, \ZZ)$, the dual homomorphism $\alpha \colon \ZZ^m \to N$, and the vectors $p_1, \ldots, p_m$ in $N$. This shows that the lattice Gale transform defines a duality between vector configurations in lattices and finite generating multisets in abelian groups. However, the symmetry present in linear Gale duality is lost: we have a lattice $N$ on the one side and an arbitrary finitely generated abelian group $A$ on the other side. Note that $\rk A = m - \rk N$ and the abelian group $A$ is a lattice if and only if the vectors $p_1,\ldots, p_m$ generate the lattice~$N$. 

\begin{remark} \label{remus}
Let $\NNN = \{p_1, \ldots, p_m\}$ be a vector configuration in a lattice $N$ and $\AAA =$ \linebreak
$=\{a_1, \dots, a_m\}$ be its lattice Gale transform in an abelian group $A$. The vector $p_1$ is a primitive vector in $N$ if and only if the elements $a_2, \ldots, a_m$ generate the group $A$. Indeed, $p_1$ is primitive if and only if there is an element $e \in M$ such that $\pairing{p_1}{e} = 1$, or, equivalently, there is a relation $a_1 + \alpha_2 a_2 +\ldots + \alpha_m a_m = 0$ with some integers $\alpha_2, \ldots, \alpha_m$. More generally, a subset $\{p_i, i \in I\}$ in $\NNN$ can be extended to a basis of $N$ if and only if the subset $\{a_j, j\notin I\}$ generates the group $A$. 
\end{remark}

\begin{example}
The lattice Gale transform of the configuration $\NNN = \{p_1, p_2\}$ in $N = \ZZ^2$ with $p_1 = (1, 0)$ and $p_2 = (1, 2)$ is the multiset
$\AAA = \{a_1, a_2\} = \{ \bar{1}^{(2)} \}$
in the group $A = \ZZ / 2 \ZZ$. On the other hand, the linear Gale transform of the configuration $\VVV = \{v_1, v_2\}$ in $V = \QQ^2$ with $v_1 = (1, 0)$ and $v_2 = (1, 2)$ is the multiset $\WWW = \{ 0^{(2)} \}$ in the space $W = \{0\}$.
\end{example}

From now on we assume that
\begin{equation} \label{con1}
\text{the configuration $\NNN$ consists of pairwise distinct primitive vectors in $N$.}
\end{equation}

Let us say that a pair $(a_i, a_j)$ of elements in $\AAA$ with $i\ne j$ is \emph{extremal} if $a_i - a_j$ lies in the subgroup generated by $\AAA \setminus \{a_i, a_j\}$ and this subgroup has infinite index in $A$. Similarly, a pair $(w_i, w_j)$ of elements in $\WWW$ with $i\ne j$ is \emph{extremal} if the elements $\WWW \setminus \{w_i, w_j\}$ span a hyperplane in $W$ and the vector $w_i - w_j$ lies in this hyperplane.

We say that a multiset $\AAA$ in $A$ is \emph{proper} if for all $a_i \in \AAA$ the multiset $\AAA \setminus \{a_i\}$ generates the group $A$ and for all $a_i, a_j \in \AAA$ with $i\ne j$ either the multiset $\AAA \setminus \{a_i ,a_j\}$ generates a subgroup of finite index in $A$ or the pair $(a_i, a_j)$ is extremal. This definition implies that for all $i\ne j$ either the vectors $\WWW \setminus \{w_i, w_j\}$ span $W$ or the pair $(w_i, w_j)$ is extremal.

The condition dual to~(\ref{con1}) is
\begin{equation} \label{con2}
\text{the multiset $\AAA$ in $A$ is proper.}
\end{equation}
Indeed, the first assertion in the definition of a proper multiset means that all vectors in $\NNN$ are primitive, and the second assertion reflects the fact that for $i \ne j$ the elements $p_i, p_j \in \NNN$ are either non-proportional or opposite.

\smallskip

In what follows, all multisets $\AAA$ are assumed to be proper. We call a vector configuration $\NNN$ in a lattice $N$ \emph{complete} if $\NNN_\QQ$ is a complete vector configuration in $N_\QQ$. As before, the dual condition for the Gale dual multiset $\AAA = \{a_1, \dots, a_m\}$ is the implication
$$
\alpha_1 a_1 + \ldots + \alpha_m a_m = 0 \
\text{ for some } \
\alpha_1, \dots, \alpha_m \in \ZZ_{\ge 0}
\implies
\alpha_1 = \ldots = \alpha_m = 0.
$$
In this case we say that $\AAA$ is \emph{strictly convex}. By tensoring sequences~(\ref{seq3}) and~(\ref{seq4}) with $\QQ$ we see that a generating multiset $\AAA$ in an abelian group $A$ is strictly convex if and only if the configuration $\AAA_\QQ$ in $A_\QQ$ is strictly convex.


\section{Suitable configurations and admissible multisets}
\label{sec4}


We keep the notation of the previous sections.

\begin{definition}
A vector configuration $\NNN$ in a lattice $N$ is called \emph{suitable} if for each $p \in \NNN$ there exists $u \in M$ such that
$$
\pairing{p}{u} = -1
\insertText{and}
\pairing{p'}{u} \ge 0 \quad
\forall p' \in \NNN \setminus \{p\}.
$$
\end{definition}

Note that a suitable configuration $N$ consists of pairwise distinct primitive vectors. Moreover, the corresponding vector configuration in $N_{\QQ}$ is sparse.

\begin{definition}
A finite generating multiset $\AAA$ in an abelian group $A$ is called \emph{admissible} if each element $a \in \AAA$ is contained in the monoid generated by $\AAA \setminus \{a\}$.
\end{definition}

Clearly, a vector configuration $\NNN$ in a lattice $N$ is suitable if and only if its lattice Gale transform $\AAA$ is admissible; cf.~\cite[Lemma 4]{Ar}.

\smallskip

The aim of this section is to classify complete suitable configurations in lattices of rank~2 and~3. We begin with a technical result that will be used later. For a finitely generated abelian group $A$ we fix a basis $\epsilon_1, \ldots, \epsilon_r$ of a free part of $A$. We use the same notation $\epsilon_1, \ldots, \epsilon_r$ for the basis $\epsilon_1 \otimes 1, \ldots, \epsilon_r \otimes 1$ in the vector space $W = A_{\QQ}$.

\begin{proposition} \label{SEC.le}
Let $\AAA$ be a strictly convex admissible multiset in an abelian group $A$ of rank~$r$. Assume that the number $m$ of elements in $\AAA$ does not exceed $2 r + 1$. Then either $m = 2 r$ or $m = 2 r + 1$, and there is  a number $1 \le l \le r$ such that the vector configuration $\AAA_\QQ$ is equivalent to 
$$
\text{either} \quad
\WWW = \{ \epsilon_1^{(2)}, \dots, \epsilon_r^{(2)} \}
\insertText{or}
\WWW' = \WWW \cup \{ \epsilon_1 + \ldots + \epsilon_l \}.
$$
Moreover, $A$ is a lattice and $\AAA$ is isomorphic to 
$$
\text{either} \quad
\{ \epsilon_1^{(2)}, \dots, \epsilon_r^{(2)} \}
\insertText{or}
\{
    \epsilon_1^{(2)}, \dots, \epsilon_r^{(2)},
    \gamma_1 \epsilon_1 + \ldots + \gamma_l \epsilon_l
\}
$$
respectively, for some positive integers $\gamma_1, \dots, \gamma_l$.
\end{proposition}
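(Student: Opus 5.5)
The plan is to first settle the rational picture in $W = A_\QQ$, following the argument of Lemma~\ref{lem11}. Then I lift it back to $A$, using the admissibility condition on each extremal ray together with properness (condition~(\ref{con2}), assumed throughout).

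\emph{Step 1 (rational shape).} Let $w_i$ be the image of $a_i$ in $W$. Strict convexity gives $w_i \ne 0$: if $a_i$ were torsion, then $k a_i = 0$ with $k>0$ would contradict strict convexity. It also gives that $\sigma = \cone(w_1,\dots,w_m)$ is strictly convex. Since $\AAA$ generates $A$, $\sigma$ is full-dimensional, so it has at least $r$ one-dimensional faces.

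Admissibility gives $w_i \in \cone(\WWW\setminus\{w_i\})$ for every $i$. As in the proof of Lemma~\ref{lem11}, every one-dimensional face of $\sigma$ therefore contains at least two elements of $\WWW$, so $m \ge 2\cdot\#\{\text{rays}\} \ge 2r$. Combined with $m \le 2r+1$, this forces:
\begin{itemize}
\item $\sigma$ has exactly $r$ rays, so it is simplicial, and I take $\epsilon_1,\dots,\epsilon_r$ to be positive generators of these rays;
\item each ray contains exactly two elements of $\WWW$, apart from at most one extra element $w$, which occurs only when $m = 2r+1$.
\end{itemize}
The extra element lies in $\sigma$, so $w = \sum_{i\in I} c_i \epsilon_i$ with $c_i>0$ and $I \neq \emptyset$. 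After renumbering, $I = \{1,\dots,l\}$. Rescaling $\epsilon_1,\dots,\epsilon_l$ by the positive factors $c_i$ is allowed under equivalence, and it yields $\WWW$ or $\WWW'$. The case $l=1$ is the one where the first ray carries three elements.

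\emph{Step 2 (the elements on one ray coincide in $A$).} Fix a ray $\QQ_{\ge0}\epsilon_i$ and choose a linear function $h$ on $W$ that vanishes on this ray and is positive on all the other rays, hence on all $w_j$ not lying on it. This is possible because $\sigma$ is simplicial.

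Take an element $x \in \AAA$ lying on this ray, with $\epsilon_i$-coordinate minimal among the elements on the ray. Admissibility gives $x = \sum_{y\ne x} n_y y$ in $A$ with $n_y \in \ZZ_{\ge0}$.
\begin{itemize}
\item Applying $h$ shows that only elements on the same ray occur with nonzero coefficient.
\item Comparing $\epsilon_i$-coordinates, which are all positive and at least that of $x$, shows that exactly one $n_y$ is nonzero, and it equals $1$.
\end{itemize}
Hence $x = y$ in $A$ for some other element $y$ on the ray, and this is an equality in $A$, not merely modulo torsion. When the ray carries only two elements, this says they are equal. Call this common element $\alpha_i$; the two elements of $\AAA$ on the $i$-th ray are both $\alpha_i$. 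In the case $l=1$ with three elements on the ray, the same minimality argument singles out two equal elements, and I declare the remaining one to be the extra element $b$. Its image $w$ then differs from $\epsilon_1$ at most by a positive multiple, which is absorbed in $\gamma_1$ below.

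This step is the one I expect to need the most care. Specifically, I must make sure that torsion cannot hide in the relation, and that in the three-element case the two coinciding elements are the ones carrying the doubled $\epsilon_1$. The height argument above handles both points, since it yields genuine equalities in $A$.

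\emph{Step 3 (lattice structure).} By properness, $\AAA\setminus\{b\}$ generates $A$ when $m=2r+1$; when $m = 2r$, $\AAA$ itself generates $A$. In either case $A$ is generated by $\alpha_1,\dots,\alpha_r$. Since $\rk A = r$, these $r$ generators are free, so $A$ is a lattice with basis $\alpha_i$, which I rename $\epsilon_i$. Finally, $b = \sum \gamma_i \epsilon_i$ with $\gamma_i \in \ZZ$. Comparing with Step 1 shows that $\gamma_i > 0$ exactly for $i \le l$ and $\gamma_i = 0$ otherwise. This gives the two stated normal forms for $\AAA$.
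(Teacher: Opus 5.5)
Your proof is correct, but it is organized differently from the paper's.

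The paper splits into two cases. It first treats $A$ a lattice. For $A=\ZZ^r\oplus B$ with $B$ finite, it projects $\AAA$ along $B$ (Lemma~\ref{lemproj}) to get the shape modulo torsion. It then shows by a separate computation that the torsion components of the two elements on each ray agree: it multiplies by a large $s$ to kill torsion and invokes strict convexity. Finally it gets a contradiction by showing that a nonzero $(0,b_0)$ is not an integer combination of $(\epsilon_1,b_1),\dots,(\epsilon_r,b_r)$.

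Your Step~2 works directly in $A$. The height function $h$ and the $\epsilon_i$-coordinate force the admissibility relation for the minimal element to be $x=1\cdot y$. That is an honest equality in $A$, so torsion never has to be separated out. Your Step~3 replaces the paper's $(0,b_0)$ computation with the observation that a group of rank $r$ generated by $r$ elements is free, since the kernel of $\ZZ^r\to A$ has rank $0$.

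Your route buys a single uniform argument that needs neither Lemma~\ref{lemproj} nor a case split. It also proves explicitly that the two elements on each ray coincide, which the paper's lattice case takes for granted. The paper's projection technique still earns its keep, since Lemma~\ref{lemproj} is reused in Cases~2 and~3 of Theorem~\ref{3DCAM.th}.

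One small remark: Step~3 does not need properness. Admissibility already puts $b$ in the monoid generated by $\AAA\setminus\{b\}$, so $\AAA\setminus\{b\}$ generates $A$.
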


\begin{proof}
The statement on $\AAA_\QQ$ follows from Lemma~\ref{lem11}. In the case when $A$ is known to be a lattice, the multiset $\AAA$ has the form
$\{ v_1^{(2)}, \ldots, v_r^{(2)}, v \}$,
where
$v = \gamma_1 v_1 + \ldots + \gamma_l v_l$
with some positive rational numbers $\gamma_1, \ldots, \gamma_l$. Since the multiset $\AAA \setminus \{v\}$ generates the lattice~$A$, the vectors $v_1, \ldots, v_r$ form a basis in $A$. So we may assume that $v_i = \epsilon_i$ for all $i$. In this case the coefficients $\gamma_1, \ldots, \gamma_l$ are integers, and the assertion follows.

\smallskip

Assume now that $A = \ZZ^r \oplus B$, where $B$ is a non-trivial finite abelian group. The next lemma follows directly from definitions.

\begin{lemma} \label{lemproj}
Let $\AAA$ be a strictly convex admissible multiset in an abelian group $A = C \oplus B$, where $B$ is finite. Then the projection of $\AAA$ to $C$ along $B$ is a strictly convex admissible multiset in $C$.
\end{lemma}

So we may assume that the projection of $\AAA$ to $\ZZ^r$ along $B$ has the desired form. Then we have
$$
\AAA = \big\{
    (\epsilon_1, b_1), \ (\epsilon_1, b_1'), \ \ldots,
    (\epsilon_r, b_r), \ (\epsilon_r, b_r'), \
    (\gamma_1 \epsilon_1 + \ldots + \gamma_l \epsilon_l, b)
\big\}
$$
with some $b_1 ,b_1', \ldots, b_r, b_r', b \in B$. Let us show that $b_i = b_i'$ for all $i = 1, \dots, r$. Let $i = 1$ for simplicity. Since $\AAA$ is admissible, there exist non-negative integers
$
\alpha, \alpha_2, \alpha_2', \dots,
\alpha_r, \alpha_r', \beta
$
such that
$$
(\epsilon_1, b_1) =
\alpha (\epsilon_1, b_1') +
\alpha_2 (\epsilon_2, b_2) + \alpha_2' (\epsilon_2, b_2') +
\ldots
$$
$$
\ldots +
\alpha_r (\epsilon_r, b_r) + \alpha_r' (\epsilon_r, b_r') +
\beta (\gamma_1 \epsilon_1 + \ldots + \gamma_l \epsilon_l, b).
$$
Multiplying this equality by a large enough positive integer $s$, we obtain an equality
$$
s (\epsilon_1, 0) =
s \alpha (\epsilon_1, 0) +
s \alpha_2 (\epsilon_2, 0) + s \alpha_2' (\epsilon_2, 0) +
\ldots
$$
$$
\ldots +
s \alpha_r (\epsilon_r, 0) + s \alpha_r' (\epsilon_r, 0) +
s \beta
(\gamma_1 \epsilon_1 + \ldots + \gamma_l \epsilon_l, 0).
$$
If $\alpha > 0$ then the equality
$$
0 =
s (\alpha - 1) (\epsilon_1, 0) +
s \alpha_2 (\epsilon_2, 0) + s \alpha_2' (\epsilon_2, 0) +
\ldots
$$
$$
\ldots +
s \alpha_r (\epsilon_r, 0) + s \alpha_r' (\epsilon_r, 0) +
s \beta
(\gamma_1 \epsilon_1 + \ldots + \gamma_l \epsilon_l, 0)
$$
implies that $\alpha = 1$ and
$
\alpha_2 = \alpha_2' = \ldots =
\alpha_r = \alpha_r' = \beta = 0
$
since $\AAA$ is strictly convex. Therefore, $b_1 = b_1'$. Assume that $\alpha = 0$. Then $\beta = \gamma_1 = 1$,
$
\alpha_2 = \alpha_2' = \ldots = \alpha_r = \alpha_r' = 0
$
and $l = 1$, hence $b_1 = b$. A symmetric argument shows that $b_1' = b$. We conclude that $b_i = b_i'$ for all $i = 1, \dots, r$.

Fix a nonzero element $b_0 \in B$. Since the multiset
$
\AAA \setminus \{
    (\gamma_1 \epsilon_1 + \ldots + \gamma_l \epsilon_l, b)
\}
$
generates the group $A$, the element $(0, b_0)$ is an integer linear combination of elements of this multiset. It follows that
$$
x_1 \epsilon_1 + \ldots + x_r \epsilon_r = 0
\insertText{and}
x_1 b_1 + \ldots + x_r b_r = b_0
$$
for some integers $x_1, \ldots, x_r$. But the first equality implies $x_1 = \ldots = x_r = 0$, a contradiction with the second equality. So the assumption that $B$ is non-trivial leads to a contradiction, and Proposition~\ref{SEC.le} is proved.
\end{proof}

We come to a classification of complete suitable configurations in a lattice of rank~2.

\begin{proposition} \label{use} 
Complete suitable vector configurations in $\ZZ^2$ up to automorphism are
$$
\NNN_1^a = \{ (1, 0), (-1, -a), (0, 1) \}
\insertText[\ ]{with} a \ge 1
\insertText[\ \ ]{and}
\NNN_2 = \{ (1, 0), (-1, 0), (0, 1), (0, -1) \}.
$$
The corresponding lattice Gale transforms are
$$
\AAA_1^a = \{ 1^{(2)}, a \}
\insertText[\ ]{in} \ZZ
\insertText{and}
\AAA_2 = \{ (1, 0)^{(2)}, \ (0, 1)^{(2)} \}
\insertText[\ ]{in} \ZZ^2.
$$
\end{proposition}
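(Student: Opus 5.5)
We work on the Gale dual side. A vector configuration $\NNN$ in $\ZZ^2$ is complete and suitable if and only if its lattice Gale transform $\AAA$ is strictly convex and admissible. Moreover, $\NNN_\QQ$ is then a complete sparse configuration in $\QQ^2$. By Remark~\ref{rho}, $\NNN$ has $m \le 4$ elements, and completeness gives $m \ge 3$. Hence $r = \rk A = m - 2 \in \{1, 2\}$, and in both cases $m \le 2r + 1$. So Proposition~\ref{SEC.le} applies directly: $A$ is a lattice, and $\AAA$ is isomorphic to one of the multisets listed there.

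The admissible shapes are as follows. If $r = 1$, then $m = 3 = 2r + 1$, and $\AAA \cong \{1^{(2)}, \gamma_1\}$ in $\ZZ$ with $\gamma_1 \ge 1$. If $r = 2$, then $m = 4 = 2r$, and $\AAA \cong \{(1,0)^{(2)}, (0,1)^{(2)}\}$ in $\ZZ^2$. The option $m = 2r+1 = 5$ is excluded because $m \le 4$; note also that $m = 2r = 2$ with $r = 1$ would give $\dim N = 0$. Conversely, each of these multisets generates $A$, is strictly convex, and is admissible. For $\{1^{(2)}, a\}$, the element $a$ equals $a$ times one of the $1$'s, and each $1$ is generated by the other $1$. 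Being generating, each multiset determines a lattice $N = \Hom(M, \ZZ)$ of rank $2$ with a configuration in it. Since $A$ is free, the $p_i$ generate $N$, so $N \cong \ZZ^2$.

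It remains to compute the inverse transforms explicitly. The kernel $M$ of $\beta \colon \ZZ^m \to A$ consists of the integer relations among the $a_i$.

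For $\AAA_1^a = \{1, 1, a\}$, the relation lattice has basis $u_1 = (1, -1, 0)$ and $u_2 = (0, a, -1)$; these span the full kernel because the last coordinate can be cleared using $u_2$. The vectors $p_i$ are the coordinate functionals evaluated on this basis, namely $p_i = (\langle u_1, e_i\rangle, \langle u_2, e_i\rangle)$. This gives $(1, 0)$, $(-1, a)$ and $(0, -1)$. Applying $y \mapsto -y$ yields $\NNN_1^a$.

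For $\AAA_2$, the relation lattice is spanned by $e_1^* - e_2^*$ and $e_3^* - e_4^*$, which gives $\NNN_2$ directly.

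Finally, distinct $a \ge 1$ give non-isomorphic multisets $\{1^{(2)}, a\}$, and $\NNN_1^a$, $\NNN_2$ have different cardinalities. Hence the configurations in the list are pairwise non-equivalent.

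The only step needing care is the bookkeeping of the duality. Isomorphisms of multisets in $A$ must correspond to automorphisms of $\ZZ^2$ carrying one configuration to the other, and the kernel basis must generate $M$ over $\ZZ$, not just over $\QQ$. Both points follow from the exactness of sequences~(\ref{seq3}) and~(\ref{seq4}) together with the converse construction described in Section~\ref{sec3}.
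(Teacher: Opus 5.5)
Your proof is correct and follows the paper's route: pass to the lattice Gale transform, then invoke Proposition~\ref{SEC.le} to conclude that $A$ is a lattice and $\AAA$ is $\{1^{(2)},a\}$ or $\{(1,0)^{(2)},(0,1)^{(2)}\}$. The only difference is cosmetic: you bound $m\le 4$ via Remark~\ref{rho}, whereas the paper cites the planar classification~(\ref{condim2}) of $\AAA_\QQ$; your explicit inverse-transform computations and non-equivalence check fill in what the paper dismisses as ``clearly''.
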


\begin{proof}
Clearly, $\AAA_1^a$ is the lattice Gale transform of $\NNN_1^a$ and $\AAA_2$ is the lattice Gale transform of $\NNN_2$. Therefore, it suffices to show that any strictly convex admissible multiset $\AAA$ with $r$ elements in an abelian group $A$ of rank $r - 2$ is isomorphic to either $\AAA_1^a$ or $\AAA_2$. By~(\ref{condim2}) the vector configuration $\AAA_\QQ$ is isomorphic to one of the configurations $\WWW_i$ in $W = A_\QQ$:
$$
\WWW_1 = \{ \epsilon_1^{(3)} \} =
\{ \epsilon_1^{(2)}, \epsilon_1 \}
\insertText[\ ]{with} \dim W = 1, \quad
\WWW_2 = \{\epsilon_1^{(2)}, \epsilon_2^{(2)}\}
\insertText[\ ]{with} \dim W = 2.
$$
Now the claim follows from Proposition~\ref{SEC.le}.
\end{proof}

We proceed with a classification of admissible multisets in a lattice of rank~3.

\begin{theorem} \label{3DCAM.th}
The list of strictly convex admissible multisets $\AAA$ with $m$ elements in an abelian group $A$ of rank $m - 3$ up to isomorphism is
\begin{enumerate}
\item
$m = 4$ and $A = \ZZ$,
$\AAA = \{ 1^{(2)}, a, b \}$, where
$1 \le a < b$;
\smallskip

\item
$m = 4$ and $A = \ZZ$,
$\AAA = \{ a^{(2)}, b^{(2)} \}$, where
$1 \le a \le b$ and $(a, b) = 1$;
\smallskip

\item
$m = 4$ and $A = \ZZ \oplus \ZZ / d \ZZ$,
$\AAA = \{ (a, \bar{u})^{(2)}, \ (b, \bar{v})^{(2)} \}$, where
$d \ge 2$, $1 \le a \le b$, $(a, b) = 1$, and
$a \bar{v} - b \bar{u} = \bar{1}$;
\smallskip

\item
$m = 5$ and $A = \ZZ^2$,
$\AAA = \{ (1, 0)^{(2)}, \ (0, 1)^{(2)}, \ (a, 0 ) \}$, where
$a \ge 1$;
\smallskip

\item
$m = 5$ and $A = \ZZ^2$,
$\AAA = \{ (1, 0)^{(2)}, \ (0, 1)^{(2)}, \ (a, b) \}$, where
$1 \le a \le b$;
\smallskip

\item
$m = 6$ and $A = \ZZ^3$,
$
\AAA = \{
    (1, 0, 0)^{(2)}, \ (0, 1, 0)^{(2)}, \ (0, 0, 1)^{(2)}
\}.
$
\end{enumerate}
Moreover, multisets from different items \textnormal{(1)-(6)} or from one item but with different $(a, b, d)$ are pairwise non-isomorphic. Multisets from \textnormal{(3)} with the same $(a, b, d)$, but with different $(\bar{u}, \bar{v})$ are isomorphic.
\end{theorem}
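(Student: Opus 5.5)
The plan is to pass to the rational picture first and then lift back to $A$. By Lemma~\ref{lem11} and Proposition~\ref{3DCSC.pr}, the configuration $\AAA_\QQ$ is equivalent to one of $\WWW_1=\{f_1^{(4)}\}$, $\WWW_2=\{f_1^{(3)},f_2^{(2)}\}$, $\WWW_3=\{f_1^{(2)},f_2^{(2)},f_1+f_2\}$, or $\WWW_4=\{f_1^{(2)},f_2^{(2)},f_3^{(2)}\}$. These configurations have $m=4,5,5,6$ respectively.

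\emph{The cases $m=5,6$.} Here $m\le 2r+1$ with $r=m-3$, so Proposition~\ref{SEC.le} applies directly. It shows that $A$ is a lattice and that $\AAA$ is one of the following:
\begin{itemize}
\item $\{\epsilon_1^{(2)},\epsilon_2^{(2)},\gamma_1\epsilon_1\}$, which is item (4);
\item $\{\epsilon_1^{(2)},\epsilon_2^{(2)},\gamma_1\epsilon_1+\gamma_2\epsilon_2\}$, which is item (5);
\item $\{\epsilon_1^{(2)},\epsilon_2^{(2)},\epsilon_3^{(2)}\}$, which is item (6).
\end{itemize}
In item (5), swapping $\epsilon_1$ and $\epsilon_2$ lets us assume $a\le b$.

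\emph{The case $m=4$, $r=1$.} Here Proposition~\ref{SEC.le} does not apply, since $4>3$, so this case must be handled by hand. First let $A=\ZZ$. The elements of $\AAA$ are positive integers $c_1\le c_2\le c_3\le c_4$, after changing sign using strict convexity. Properness gives two conditions: $\gcd$ of any three of them is $1$, and for $i\ne j$ the remaining two generate a subgroup of finite index, which is automatic. Admissibility requires each $c_i$ to lie in the monoid generated by the others. Applied to $c_1$, this forces $c_2=c_1$. Applied to $c_2$, it is then automatic. Now distinguish two subcases:
\begin{itemize}
\item If $c_1=1$, we get item (1), but with $1\le a\le b$. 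The equality cases $a=b$ must be resolved: $a=b=1$ falls into (2) with $a=b=1$, while $a=b>1$ gives $\{1^{(2)},a^{(2)}\}$, which is item (2) with $(1,a)$. Care is needed here to state the split correctly, so that (1) requires $a<b$ and (2) covers $\{a^{(2)},b^{(2)}\}$.
\item If $c_1>1$, then admissibility of $c_3$ forces $c_3\in c_1\ZZ_{\ge0}+c_4\ZZ_{\ge0}$, hence $c_1\mid c_3$. By the gcd condition $\gcd(c_1,c_3,c_4)=1$. Admissibility of $c_4$ then gives $c_4\in\langle c_1,c_3\rangle_{\mathrm{monoid}}\subset c_1\ZZ$, which contradicts the gcd condition unless $c_4=c_3$. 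Indeed, $c_4$ must lie in the monoid generated by $c_1,c_1,c_3$, and if $c_3\ne c_4$ then $c_1\mid c_4$ as well, a contradiction. Symmetrically, $c_3$ uses $c_4$. So the multiset is $\{a^{(2)},b^{(2)}\}$ with $(a,b)=1$.
\end{itemize}

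\emph{Torsion, $A=\ZZ\oplus B$ with $B\ne0$.} By Lemma~\ref{lemproj}, the projection is one of the above. For the shape $\{1^{(2)},a,b\}$, the argument from the proof of Proposition~\ref{SEC.le} should force equal torsion parts on the two copies of $1$, and also on $a$ and $b$ via their expressions. Then removing one element leaves a set that cannot generate $B$. I expect this step to be the main obstacle. Specifically, one must check that monoid expressions such as $a=a\cdot 1$ force the torsion component of $a$ to be $a b_1$, and similarly for $b$ when $b<a+\dots$; after this, every element is $(c,cb_1)$ up to translation, and the group generated has no torsion, a contradiction.

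For $\{a^{(2)},b^{(2)}\}$ with $a,b$ coprime and $a,b>1$, write the elements as $(a,u),(a,u'),(b,v),(b,v')$. Admissibility of $(a,u)$ requires an expression $(a,u)=x(a,u')+y(b,v)+z(b,v')$. Since $b\nmid a$, the only possibility is $x=1$, $y=z=0$, forcing $u=u'$, and similarly $v=v'$. When $a=1$, other expressions occur, and I would handle them separately. Generation of $A$ by the three elements left after removing one, together with $\gcd(a,b)=1$, then gives $B=\ZZ/d$ cyclic, generated by $a\bar v-b\bar u$; normalizing gives $a\bar v-b\bar u=\bar1$. For the isomorphism claims, I would apply automorphisms $(n,t)\mapsto(n,t+n\bar w)$ and multiplication by units on $\ZZ/d$ to normalize $(\bar u,\bar v)$.

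\emph{Non-isomorphism.} I would distinguish the items by the invariants $m$, the torsion subgroup, and the multiset of multiplicities and values (or, in rank $2$, the position of the extra vector relative to the two doubled rays).
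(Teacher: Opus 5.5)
Your proposal is correct and follows the paper's proof essentially step by step: Proposition~\ref{SEC.le} for $m=5,6$, the minimal-element and divisibility argument for $m=4$, $A=\ZZ$ (your inference $c_1\mid c_3$ is only valid once you have assumed $c_3\ne c_4$, which is how the paper orders it), Lemma~\ref{lemproj} to pin down the torsion components, and shears $(n,\bar t)\mapsto(n,\bar t+n\bar s)$ together with units of $\ZZ/d\ZZ$ for the isomorphism statement in (3). The one real variation is in your favour: observing that the subgroup generated by $(a,\bar u)$ and $(b,\bar v)$ meets $B$ exactly in the cyclic group generated by $a\bar v-b\bar u$ handles an arbitrary finite $B$ at once, so the paper's bound on the number of cyclic factors and its separate Case~3 become unnecessary; the $a=1$ subcases you deferred close in one line as in the paper (for $b>1$, each of $\bar t_3,\bar t_4$ equals the other or $b\bar u$, hence they coincide, and for $a=b=1$ the four elements simply pair up).
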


\begin{proof}
Since the multiset $\AAA$ is strictly convex, the rank of the group $A$ is positive and we have $m \ge 4$. Lemma~\ref{lem11} implies
$m \le 2 n = 2 (m - (m - 3)) = 6$.
The cases $m = 5, 6$ follow from Proposition~\ref{SEC.le}. So from now on we assume that $m = 4$. Here $\AAA_\QQ$ is isomorphic to a configuration
$$
\WWW_1 = \{ e_1^{(4)} \} \insertText{with} \dim W = 1.
$$

\emph{Case~1. }\
Let $A = \ZZ$. After possibly replacing the generator of $\ZZ$ by its negative, all elements of $\AAA$ are positive. Since $\AAA$ is admissible it follows that 
$\AAA$ contains its minimal element with multiplicity at least two. So we have $\AAA = \{ a^{(2)}, b, c \}$ for some $1 \le a \le b \le c$.

If $a = 1$, then there is no restriction on $b, c$ and we are done. Let $a > 1$. If $b \ne c$, then $b = a k$ for some $k \ge 1$, so $(a, c) = 1$ and therefore
$c \notin \gen{a, b}_{\ZZ_{\ge 0}}$,
a contradiction. Hence $b = c$ and $(a, b) = 1$. This concludes Case~1.

\smallskip

Now assume that the group $A$ has torsion. Then
$
A =
\ZZ \oplus \ZZ / d_1 \ZZ \oplus \ldots \oplus \ZZ / d_s \ZZ,
$
where $d_i \ge 2$ and $d_i$ divides $d_{i + 1}$ for $1 \le i \le s - 1$. Since the group $A$ is generated by $\AAA \setminus \{a\}$ for any $a \in \AAA$, it is generated by at most three elements, so $s \le 2$. Hence it remains to consider two more cases.

\smallskip

\emph{Case~2. }\
Let $A = \ZZ \oplus \ZZ / d \ZZ$ with $d \in \ZZ_{\ge 2}$. By Lemma~\ref{lemproj}, we assume that the projection of $\AAA$ to $\ZZ$ along $\ZZ / d \ZZ$ is as in Case~1.

First we assume that
$$
\AAA = \{
    (1, \bar{t}_1), \ (1, \bar{t}_2), \
    (a, \bar{t}_3), \ (b, \bar{t}_4)
\}
$$
for some $1 \le a < b$. Since the element $(1, \bar{t}_2)$ lies in the monoid generated by all other elements of $\AAA$, we have either $\bar{t}_2 = \bar{t}_1$, or $\bar{t}_2 = \bar{t}_3$ and $a = 1$. By the symmetric argument, we conclude that $\bar{t}_1 = \bar{t}_2 = \bar{t}$. Then we have $\bar{t}_3 = a \bar{t}$ and $\bar{t}_4 = b \bar{t}$. Let us show that in this case the multiset does not generate the group $A$.  More precisely, the element $(0, \bar{1})$ is not in the group generated by $\AAA$. Indeed, the system
$$
\left\{
\begin{array}{ccc}
x \cdot 1 + y \cdot 1 + z \cdot a + u \cdot b & = & 0 \\
x \cdot \bar{t} + y \cdot \bar{t} + z \cdot a \bar{t} +
u \cdot b \bar{t} & = & \bar{1}
\end{array}
\right.
$$
has no solution in $\ZZ$, since we have
$
(x + y + z \cdot a + u \cdot b) \bar{t} =
0 \bar{t} \ne \bar{1}.
$

Now we assume 
$$
\AAA = \{
(a, \bar{t}_1), \
(a, \bar{t}_2), \
(b, \bar{t}_3), \
(b, \bar{t}_4)
\}
$$
for some $1 \le a \le b$ and $(a, b) = 1$. Similarly, we obtain $\bar{t}_1 = \bar{t}_2 = \bar{u}$. For $\bar{t}_3$ we have either $\bar{t}_3 = \bar{t}_4$ or $\bar{t}_3 = b \bar{u}$ and $a = 1$. But the second variant leads to $\bar{t}_4 = b \bar{u}=\bar{t}_3$ . So we have
$$
\AAA = \{ (a, \bar{u})^{(2)}, \ (b, \bar{v})^{(2)} \}.
$$
We need to check when $\AAA$ generates the group $A$. It suffices to know that $(1, \bar{0})$ and $(0, \bar{1})$ are in the subgroup generated by $\AAA$.

Assume that we have
$x (a, \bar{u}) + y (b, \bar{v}) = (0, \bar{1})$
for some integers $x$ and $y$. Since $(a, b)=1$, there are integers $g$ and $h$ such that $a g + b h = 1$. Then
$g (a, \bar{u}) + h (b, \bar{v}) = (1, \bar{w})$
for some $w \in \ZZ$ and
$$
(g - w x) (a, \bar{u}) + (h - w y) (b, \bar{v}) = (1, \bar{0}).
$$
These arguments show that it suffices to know when $(0, \bar{1})$ is in the group generated by $\AAA$.

Again assume
$x (a, \bar{u}) + y (b, \bar{v}) = (0, \bar{1})$
for some integers $x$ and $y$. Then $x = -k b$ and $y = k a$ for some integer $k$, so
$-k b \bar{u} + k a \bar{v} = \bar{1}$.
The latter equation has an integer solution~$k$ if and only if $(a v - b u, d) = 1$. This completes the consideration of Case~2.

\smallskip

\emph{Case~3. }\
Let
$A = \ZZ \oplus \ZZ / d_1 \ZZ \oplus \ZZ / d_2 \ZZ$
with $d_1, d_2 \ge 2$ and $d_1$ divides $d_2$. Taking the projections of $\AAA$ to $\ZZ \oplus \ZZ / d_1 \ZZ$ along $\ZZ / d_2 \ZZ$ and to $\ZZ \oplus \ZZ / d_2 \ZZ$ along $\ZZ / d_1 \ZZ$ and applying Lemma~\ref{lemproj} and Case~2, we conclude that
$$
\AAA = \{
    (a, \bar{u}_1, \bar{u}_2)^{(2)}, \
    (b, \bar{v}_1, \bar{v}_2)^{(2)}
\},
$$
where $1 \le a \le b$, $(a, b) = 1$,
$(a v_1 - b u_1, d_1) = 1$, and
$(a v_2 - b u_2, d_2) = 1$.
Since $\AAA$ generates the group $A$, there are integers $x$ and $y$ such that 
$$
x (a, \bar{u}_1, \bar{u}_2) + y (b, \bar{v}_1, \bar{v}_2) =
(0, \bar{0}, \bar{1}).
$$
The equality of the first coordinates implies $x = -k b$, $y = k a$ for some integer $k$. For the third coordinates we have
$-k b u_2 + k a v_2 = 1 + s d_2$
for some integer $s$. This shows that $(k, d_2) = 1$. In particular, $(k, d_1) = 1$. For the second coordinates we have 
$$
-k b u_1 + k a v_1 = l d_1
$$
for some integer $l$. This contradicts the facts that
$(k, d_1) = 1$ and
$(a v_1 - b u_1, d_1) = 1$.
So Case~3 is not possible.

\smallskip

It remains to check whether the obtained multisets are pairwise non-isomorphic. This is straightforward if $A$ is a lattice. If $A = \ZZ \oplus \ZZ / d \ZZ$, we reconstruct $a$ and $b$ by projecting the multiset to $\ZZ$ along the torsion subgroup, while $d$ is the order of the torsion subgroup. Applying an automorphism of $\ZZ / d \ZZ$ we can replace the condition
$(a v - b u, d) = 1$
with the condition
$a \bar{v} - b \bar{u} = \bar{1}$,
hence $a v - b u = 1 + l d$ for some integer~$l$. Since $a$ and $b$ are coprime, there exist integers $u_0$ and $v_0$ with
$a v_0 - b u_0 = 1$.
Then
$u = u_0 (1 + l d) + s a$ and
$v = v_0 (1 + l d) + s b$
for some integer $s$. We conclude that
$\bar{u} = \bar{u}_0 + a \bar{s}$ and
$\bar{v} = \bar{v}_0 + b \bar{s}$.

Replace the generating set $(1, \bar{0})$, $(0, \bar{1})$ of the group $A$ by $(1, -\bar{s})$, $(0, \bar{1})$. This transition defines an automorphism of $A$ that sends $(a, \bar{u})$ to $(a, \bar{u} + a \bar{s})$ and $(b, \bar{v})$ to $(b, \bar{v} + b \bar{s})$. This shows that the multiset $\AAA$ corresponding to a pair $(\bar{u}, \bar{v})$ is isomorphic to the multiset corresponding to the fixed pair $(\bar{u}_0, \bar{v}_0)$. So the multiset $\AAA$ is independent, up to isomorphism, of the choice of $(\bar{u}, \bar{v})$. 

This completes the proof of Theorem~\ref{3DCAM.th}.
\end{proof}

Direct computations of Gale duals to the multisets listed in Theorem~\ref{3DCAM.th} lead to the following result.

\begin{corollary} \label{3DCSC.co}
The list of complete suitable vector configurations $\NNN$ in $\ZZ^3$, where each vector in $\NNN$ is written as a column of a matrix of size $3 \times |\NNN|$, up to automorphism of $\ZZ^3$ is the following:

\medskip 

\begin{center}
\normalfont
\begin{tabular}{rlrlrl}
(1) 
&
$
\begin{pmatrix}
1 & 0 & 0 & -1 \\
0 & 1 & 0 & -a \\
0 & 0 & 1 & -b
\end{pmatrix}
$
&


(3)
&
$
\begin{pmatrix}
1 & 0 &   -1 &    0 \\
0 & 1 &    0 &   -1 \\
0 & 0 & ad & -bd
\end{pmatrix}
$
&


(5)
&
$
\begin{pmatrix}
1 & 0 & 0 & -1 &  0 \\
0 & 1 & 0 &  0 & -1 \\
0 & 0 & 1 & -a & -b
\end{pmatrix}
$
\medskip
\\


(2)
&
$
\begin{pmatrix}
1 & 0 & -1 &  0 \\
0 & 1 &  0 & -1 \\
0 & 0 &  a & -b
\end{pmatrix}
$
&


(4)
&
$
\begin{pmatrix}
1 & 0 & 0 & -1 &  0 \\
0 & 1 & 0 &  0 & -1 \\
0 & 0 & 1 &  0 & -a
\end{pmatrix}
$
&


(6)
&
$
\begin{pmatrix}
1 & 0 & 0 & -1 &  0 &  0 \\
0 & 1 & 0 &  0 & -1 &  0 \\
0 & 0 & 1 &  0 &  0 & -1
\end{pmatrix},
$
\end{tabular}
\end{center}

\smallskip

\noindent where $1 \le a < b$ in \textnormal{(1)},
$1 \le a \le b$ and $(a, b) = 1$ in \textnormal{(2)},
$1 \le a \le b$ and $(a, b) = 1$
with $d \ge 2$ in \textnormal{(3)},
$a \ge 1$ in \textnormal{(4)}, and
$1\le a\le b$ in \textnormal{(5)}.
\end{corollary}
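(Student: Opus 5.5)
By the remark after the definition of admissible multisets, a vector configuration $\NNN$ in $\ZZ^3$ is complete and suitable if and only if its lattice Gale transform $\AAA$ is strictly convex and admissible, in a group $A$ of rank $m-3$. The lattice Gale transform is a duality between vector configurations in lattices (up to automorphism) and generating multisets (up to isomorphism). So Corollary~\ref{3DCSC.co} reduces to Theorem~\ref{3DCAM.th}. For each item (1)--(6) of Theorem~\ref{3DCAM.th} it suffices to exhibit a configuration in $\ZZ^3$ whose lattice Gale transform is the listed multiset. Non-isomorphic multisets then give non-equivalent configurations, and every complete suitable configuration appears in the list.

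For each matrix $P$ (columns $p_1,\dots,p_m$) I will check three things. First, the columns span $\ZZ^3$, or for (3) the lattice they generate, since the Gale transform depends only on the map $\alpha\colon\ZZ^m\to N$. Second, the row space $M\subset\ZZ^m$ is exactly the kernel of $\beta\colon\ZZ^m\to A$, $e_i^*\mapsto a_i$. Third, $\beta$ is surjective. The criterion is the characterization $\sum x_i a_i=0 \iff x_i=\pairing{p_i}{u}$ for some $u\in M$.

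\begin{itemize}
\item[(1)] The relations among $\{1,1,a,b\}$ in $\ZZ$ are $x_1+x_2+ax_3+bx_4=0$. This kernel is generated by $(1,-1,0,0)$, $(0,a,-1,0)$, $(0,b,0,-1)$. Reordering the elements as $a_4,a_3,a_2,a_1$ and negating rows gives the matrix in (1).
\item[(2)] The matrix of (2) has rows $(1,0,-1,0)$, $(0,1,0,-1)$, $(0,0,a,-b)$. These generate the relations of $\{a,a,b,b\}$, because $x_1+x_3$ and $x_2+x_4$ multiples reduce a relation to $a y - b z=0$, whose solutions are multiples of $(b,a)$ since $(a,b)=1$.
\item[(4)--(6)] Here $A$ is a lattice and the relations are immediate to read off. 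For (5), $(a,b)=x_4e_1+x_5e_2$ forces the third row $(0,0,1,-a,-b)$ together with the pair rows. Item (4) is the case $b=0$.
\end{itemize}

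The main obstacle is item (3), where $A=\ZZ\oplus\ZZ/d\ZZ$ has torsion and the columns of the matrix generate a sublattice of index $d$ rather than $\ZZ^3$. Here I would instead take $N$ to be the lattice generated by the columns of the (2)-type matrix together with the extra torsion data, or equivalently verify directly. Take $N$ to be the lattice dual to the kernel $M$ of $\beta$, and compute $M$ as the set of $x\in\ZZ^4$ with $x_1+x_2$, $x_3+x_4$ in appropriate relation: $(x_1+x_2)a+(x_3+x_4)b=0$ and $(x_1+x_2)\bar u+(x_3+x_4)\bar v=\bar 0$. Writing $x_1+x_2=kb$ and $x_3+x_4=-ka$, the second condition gives $k(b\bar u-a\bar v)=-k\bar 1=\bar0$, i.e.\ $d\mid k$. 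Hence $M$ is spanned by $(1,-1,0,0)$, $(0,0,1,-1)$ and $(db,0,-da,0)$, i.e.\ a basis equivalent to rows giving columns $(1,0,0),(0,1,0),(-1,0,ad),(0,-1,-bd)$ after an integral change of basis in $M$ and reordering. I expect to spend care on choosing this basis so that the resulting matrix matches exactly the displayed form (3). Beyond that, only bookkeeping remains: confirming that the parameter ranges transfer verbatim from Theorem~\ref{3DCAM.th}.
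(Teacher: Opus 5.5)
Your proposal is correct and follows the paper's route: reduce to Theorem~\ref{3DCAM.th} via lattice Gale duality and compute the Gale dual of each listed multiset. Your computation of $\ker\beta$ in item (3), where $a\bar v-b\bar u=\bar 1$ forces $d\mid k$, does give the row lattice of matrix (3). One correction: drop your first check, since the only criterion needed is that the integer row span of the matrix equals $\ker\beta$ with $N=\ZZ^3=\Hom(M,\ZZ)$ unchanged; passing to the sublattice generated by the columns in (3) would kill the $\ZZ/d\ZZ$ torsion and return you to item (2).
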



\section{Toric varieties, Demazure roots, and root subgroups}
\label{sec5}


We start with a brief reminder on the theory of toric varieties; see~\cite{Fu, CLS} for details. We work over an algebraically closed field $\KK$ of characteristic zero. Denote by $\Gm$ and $\Ga$ the multiplicative group and the additive group of the ground field, respectively, considered as one-dimensional linear algebraic groups.

By a \emph{torus} we mean a linear algebraic group $T$ isomorphic to $\Gm^n$ for some non-negative integer $n$, called the \emph{rank} of $T$. A normal irreducible variety $X$ is called \emph{toric} if there is a faithful action of a torus $T$ on $X$ with an open orbit.

Fix a torus $T$ of rank $n$. Let $N$ be the lattice of one-parameter subgroups of $T$ and ${M = \Hom_\ZZ(N, \ZZ)}$ be the character lattice. For any $p\in N$ let $\lambda_p \colon \Gm \to T$ be the corresponding one-parameter subgroup and for any $u \in M$ let $\chi^u \colon T \to \Gm$ be the corresponding character. There is a natural pairing $N \times M \to \ZZ$, $(p, u) \mapsto \pairing{p}{u}$ defined by $\chi^u \big( \lambda_p(t) \big) = t^{\pairing{p}{u}}$.
Denote by $N_\QQ$ and $M_\QQ$ the associated $\QQ$-vector spaces $N \otimes_\ZZ \QQ$ and $M \otimes_\ZZ \QQ$. Let 
$\pairing{\cdot}{\cdot} \colon N_\QQ \times M_\QQ \to \QQ$ denote the extension of this pairing.

Recall that the \emph{dual cone} $\sigma^\vee$ to a cone $\sigma$ in $N$ is defined by
$$
\sigma^\vee = \{
    u \in M_\QQ \mid \pairing{v}{u} \geq 0 \
    \forall v \in \sigma
\}.
$$
Given a strictly convex cone $\sigma$ in $N$ one can consider the finitely generated $\KK$-algebra $\KK[\sigma^\vee \cap M]$ graded by the monoid of lattice points of the cone $\sigma^\vee$:
$$
\KK[\sigma^\vee \cap M] =
\bigoplus_{u \in \sigma^\vee \cap M} \KK \chi^u,
$$
where $\chi^u \cdot \chi^{u'} = \chi^{u + u'}$. Denote by $X_\sigma$ the affine variety $\Spec(\KK[\sigma^\vee \cap M])$. It is well-known that given an affine variety $X$ there is a bijection between faithful $T$-actions on $X$ and effective $M$-gradings on $\KK[X]$. The aforementioned $(\sigma^\vee \cap M)$-grading on $\KK[\sigma^\vee \cap M]$ corresponds to a faithful $T$-action on $X_\sigma$ with an open orbit. Therefore, $X_\sigma$ is a toric variety. Moreover, every affine toric variety $X$ occurs this way.

A \emph{fan} in $N$ is a finite collection $\Sigma$ of strictly convex cones in $N$ such that for all $\sigma_1, \sigma_2 \in \Sigma$ every face of $\sigma_1$ is an element of $\Sigma$ and the intersection $\sigma_1 \cap \sigma_2$ is a face of both $\sigma_1$ and~$\sigma_2$. There is a one-to-one correspondence between toric varieties with an acting torus $T$ and fans in~$N$. Namely, given a fan $\Sigma$ in $N$ the corresponding toric variety $X_\Sigma$ is a union of open affine charts $X_\sigma$, $\sigma \in \Sigma$, where any two charts $X_{\sigma_1}$ and $X_{\sigma_2}$ are glued along their common open subset $X_{\sigma_1 \cap \sigma_2}$. Every toric variety arises this way.

There is a bijection between $T$-orbits on the toric variety $X_\Sigma$ and cones $\sigma \in \Sigma$; see \cite[Section~3.1]{Fu} and \cite[Section~3.2]{CLS}. We denote by $O(\sigma)$ the $T$-orbit in $X$ corresponding to a cone $\sigma \in \Sigma$. In particular, the cone $\sigma = \{0\}$ corresponds to the open $T$-orbit $\OOO$ in $X$, and cones $\sigma$ of dimension $n$ represent $T$-fixed points on $X$. Further, the closures $D_1, \ldots, D_m$ of the orbits $O(\rho_1), \ldots, O(\rho_m)$,
where $\rho_1, \ldots, \rho_m$ are the rays of $\Sigma$, form the set of all $T$-invariant prime divisors on~$X$. A point $x \in O(\sigma)$ is contained in the divisor $D_i$ if and only if the ray $\rho_i$ is contained in the cone $\sigma$.

Recall that a cone $\sigma$ is \emph{regular} if the set of primitive vectors on rays of $\sigma$ can be extended to a basis of $N$. A point $x \in O(\sigma)$ is a regular point of the variety $X$ if and only if the cone $\sigma$ is regular.

A toric variety $X$ is \emph{non-degenerate} if $\KK[X]^\times = \KK^{\times}$ or, equivalently, the fan $\Sigma$ is not contained in a proper subspace in $N_\QQ$. One more
equivalent condition is that there is no decomposition $T = T' \times T_0$, where $T'$ and $T_0$ are subtori of positive dimension and $X = X' \times T_0$, where $X'$ is a $T'$-toric variety; see~\cite[Proposition 3.3.9]{CLS}.

Denote by $|\Sigma|$ the \emph{support} of a fan $\Sigma$ in $N$:
$$
|\Sigma| = \bigcup_{\sigma \in \Sigma} \sigma.
$$
A fan $\Sigma$ in $N$ is called \textit{complete} if $|\Sigma| = N_\QQ$. The corresponding toric variety $X_\Sigma$ is complete if and only if the fan $\Sigma$ is complete.

\smallskip

Now we come to basic results on root subgroups in the automorphism group $\Aut(X)$ of a toric variety $X$. If $\Ga \times X \to X$ is a nontrivial regular action on an algebraic variety~$X$, then the image $H$ of the group $\Ga$ in $\Aut(X)$ is called a \emph{$\Ga$-subgroup}. If the variety $X$ is toric with the acting torus $T$, then a $\Ga$-subgroup of $\Aut(X)$ normalized by $T$ is called a \emph{$T$-root subgroup} or just a \emph{root subgroup}. Since $t H(a) t^{-1} \in H$ for all $t \in T$ and $a \in \Ga$, we have $t H(a) t^{-1} = H(\chi^e(t) a)$ for some $e = e(H) \in M$. Here $H(a)$ denotes the image of $a$ under the map $\Ga \to \Aut(X)$.

Let $X$ be a non-degenerate toric variety and $\Sigma$ be the corresponding fan in the lattice $N$. Let $\Sigma(1) = \{ \rho_1, \ldots, \rho_m \}$ be the set of rays of the fan $\Sigma$. Denote by $p_i$ the primitive lattice vector on the ray $\rho_i$. We also use the notation $p_{\rho}$ for the primitive lattice vector on a ray $\rho$.

\begin{definition} \label{defdr}
A vector $e \in M$ is called a \emph{Demazure root} of a fan $\Sigma$ in the lattice $N$ if there is an index $1 \le i \le m$ such that the following two conditions hold:
\begin{enumerate}
\item[(R1)]
we have $\pairing{p_i}{e} = -1$ and $\pairing{p_j}{e} \ge 0$ for all $j \ne i$;


\item[(R2)]
if $\sigma \in \Sigma$ and $e|_{\sigma} \equiv 0$, then $\cone(\sigma, \rho_i)$ is in $\Sigma$ as well. 
\end{enumerate} 
\end{definition} 

\begin{remark}
Condition (R2) holds automatically if the fan $\Sigma$ has convex support.
\end{remark}

We say that the ray $\rho_i$ from Definition~\ref{defdr} is \emph{associated} with the Demazure root $e$. Sometimes we denote this ray by $\rho_e$. Let $\RRR_{\rho}(\Sigma)$ denote the set of Demazure roots of a fan $\Sigma$ associated with a ray $\rho$. For the set $\RRR(\Sigma)$ of all Demazure roots,
we have
$$
\RRR(\Sigma) =
\bigsqcup_{\rho \in \Sigma(1)} \RRR_{\rho}(\Sigma).
$$

\begin{theorem} \label{teo1}
Let $X$ be a non-degenerate toric variety and $\Sigma$ be the corresponding fan. The map $H \to e(H)$ establishes  
a bijection between root subgroups in $\Aut(X)$ and Demazure roots of the fan $\Sigma$.
\end{theorem}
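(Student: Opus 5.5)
We prove Theorem~\ref{teo1} through the Cox construction, as announced in the introduction. Since $X$ is non-degenerate, it is a good quotient $X = \widehat{X}/\!/H_X$, where $\widehat{X}$ is an open subset of $\AA^m = \Spec \KK[x_1,\ldots,x_m]$. The complement of $\widehat{X}$ in $\AA^m$ is the union of the coordinate subspaces $\{x_i = 0,\ i \in I\}$ over those $I$ for which $\{\rho_i, i\in I\}$ is not contained in a cone of $\Sigma$. The quasitorus $H_X = \Spec \KK[A]$ is the diagonalizable group with character group $A = \Cl(X)$, and $A$ is exactly the lattice Gale transform of $\{p_1,\ldots,p_m\}$. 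The Cox ring $\KK[x_1,\ldots,x_m]$ is $A$-graded by $\deg x_i = a_i$.

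\emph{Step 1: from a Demazure root to a root subgroup.} Let $e$ satisfy (R1) for the ray $\rho_i$. We form the monomial $x^{e} = \prod_{j \ne i} x_j^{\pairing{p_j}{e}}$, which is a polynomial because $\pairing{p_j}{e}\ge 0$. The relation $\sum_j \pairing{p_j}{e} a_j = 0$ gives $\deg x^e = a_i$. The derivation $\partial = x^e\,\partial/\partial x_i$ is therefore locally nilpotent, homogeneous of degree $0$ with respect to $A$, and commutes with $H_X$. Its exponential $s \mapsto (x_i \mapsto x_i + s x^e)$ is a $\Ga$-action on $\AA^m$ commuting with $H_X$, and it is normalized by the big torus $(\Gm)^m$ with weight $e$ (viewed in $\ZZ^m$ via $M\to\ZZ^m$).

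\emph{The main obstacle} is to check that this action preserves $\widehat{X}$, and this is exactly where (R2) enters. A point $x\in\widehat X$ lies in $\widehat X_\sigma=\{x_j\ne0,\ \rho_j\notin\sigma\}$ for some $\sigma\in\Sigma$. The translation changes only $x_i$. If $\rho_i\in\sigma$, or if $x^e(x)\ne0$, then the orbit stays in some $\widehat X_{\sigma'}$: in the second case every variable appearing in $x^e$ is nonzero, so each such $\rho_j$ lies outside the relevant cone, and we may enlarge $\sigma$ harmlessly. The dangerous case is $x^e(x)=0$, where nothing is moved, so the orbit is a point. Conversely, if $x^e(x)\ne0$, then $e$ vanishes on $\sigma$, because every $\rho_j\subset\sigma$ with $j \ne i$ has $x_j(x)=0$, which forces $\pairing{p_j}{e}=0$. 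Then (R2) gives $\cone(\sigma,\rho_i)\in\Sigma$, and the whole orbit lies in $\widehat X_{\cone(\sigma,\rho_i)}$. The action thus descends to $X$ and gives a root subgroup $H_e$ with $e(H_e)=e$.

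\emph{Step 2: injectivity and surjectivity.} Let $H$ be a root subgroup. We lift it to $\widehat X$ using the fact that $\Ga$-actions lift to the total coordinate space; one can also use the description of $\Aut(X)$ from~\cite{Cox, De}. This produces a homogeneous locally nilpotent derivation of $\KK[x_1,\ldots,x_m]$, extended across the complement of codimension $\ge 2$, which is normalized by $(\Gm)^m$. A homogeneous LND of a polynomial ring that is normalized by the full diagonal torus must be of the form $c\,x^{u}\partial/\partial x_i$ with $x^u$ free of $x_i$. Since it has $A$-degree $0$ and the weight is $e$, the exponent vector of $x^u$ is the image of $e$, which yields (R1). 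Necessity of (R2) comes from running the argument of Step 1 backwards: if (R2) fails for some $\sigma$, then points of $\widehat X_\sigma$ with $x^e\ne0$ are moved out of $\widehat X$. Finally, $e$ determines the derivation up to a scalar, so $H$ is determined by $e$, and the map $H \mapsto e(H)$ is a bijection.
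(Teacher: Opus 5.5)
Your route is the paper's: you lift to $\AA^m$, identify lifted root subgroups with the $\TT$-homogeneous derivations $x^u\,\partial/\partial x_i$, read off (R1) from $\Cl(X)$-degree zero, and read off (R2) from invariance of $\widehat X$. Checking invariance chart by chart on the $\widehat X_\sigma$, instead of via the paper's pairs of $\TT$-orbits $(O(J'),O(J''))$, is only a change of bookkeeping.

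\textbf{Step~1 is mis-justified.} The membership $x\in\widehat X_\sigma$ only says $x_j\ne 0$ for $\rho_j\not\subseteq\sigma$. It does not give $x_j(x)=0$ for the rays $\rho_j\subseteq\sigma$, $j\ne i$.
\begin{enumerate}
\item The claim fails for an arbitrary such $\sigma$. A point of the open $\TT$-orbit lies in every $\widehat X_\sigma$, and $e$ need not vanish on a cone avoiding $\rho_i$.
\item It fails even for the smallest such $\sigma$ when that cone is non-simplicial. For example, on the total coordinate space of $Y(a,b)$, in the numbering of Claim~2, take a point with only $x_1=x_4=0$. The smallest cone containing $p_1,p_4$ is $\cone(p_1,p_2,p_4,p_5)$, yet $x_2,x_5\ne 0$.
\end{enumerate}
Your remarks about the ``dangerous case'' $x^e(x)=0$ (which is actually the trivial case) and about enlarging $\sigma$ come from the same confusion.

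The repair is short. Put $Z(x)=\{j : x_j(x)=0\}$, and let $\sigma$ be the smallest cone of $\Sigma$ containing all $p_j$ with $j\in Z(x)$.
\begin{enumerate}
\item If $\rho_i\subseteq\sigma$, the orbit stays in $\widehat X_\sigma$.
\item If $x^e(x)=0$, the orbit is a point.
\item Otherwise $\pairing{p_j}{e}=0$ for all $j\in Z(x)$, and $e\ge 0$ on $\sigma$ by (R1). Moreover $e$ vanishes at $\sum_{j\in Z(x)}p_j$, which lies in the relative interior of $\sigma$. Hence $e|_\sigma\equiv 0$, and (R2) puts the whole orbit in $\widehat X_{\cone(\sigma,\rho_i)}$, since the coordinates $x_j$ with $\rho_j\not\subseteq\cone(\sigma,\rho_i)$ are untouched and nonzero.
\end{enumerate}

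\textbf{The necessity of (R2) must be stated precisely.} You should move the points with $Z(x)=\sigma(1)$ exactly, not all points of $\widehat X_\sigma$ with $x^e\ne 0$; points of the open orbit, for instance, are never moved out of $\widehat X$. You also need that $\sigma(1)\cup\{\rho_i\}$ lies in no cone of $\Sigma$, which is stronger than $\cone(\sigma,\rho_i)\notin\Sigma$.

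This stronger statement does follow from (R1). Suppose $\sigma,\rho_i\subseteq\tau\in\Sigma$, and let $u\in M$ be non-negative on $\tau$ with $\tau\cap u^\perp=\sigma$. Then $u+\pairing{p_i}{u}\,e$ is non-negative on $\tau$ and vanishes exactly on $\cone(\sigma,\rho_i)$, so this cone is a face of $\tau$ and hence lies in $\Sigma$. The paper also leaves this point implicit. With these two repairs your argument is complete.
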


We denote by $H_e$ the root subgroup in $\Aut(X)$ corresponding to a root $e \in \RRR(\Sigma)$. Let $\lambda_e$ be the one-parameter subgroup in $T$ corresponding to the primitive vector $p_e \in N$ on the ray $\rho_e$. The next theorem describes the action of the subgroup $H_e$ on $X$.

\begin{theorem} \label{teo2}
Let $X$ be a non-degenerate toric variety and $\Sigma$ be the corresponding fan.
\begin{enumerate}
\item 
For any root subgroup $H_e$ in $\Aut(X)$ and any $T$-orbit $O(\sigma)$ on $X$ with $\sigma \in \Sigma$, either $O(\sigma)$ consists of $H_e$-fixed points, or 
every $H_e$-orbit intersecting $O(\sigma)$ meets exactly one other $T$-orbit.
\item
In the second case, $O(\sigma)$ belongs to a pair $\big( O(\sigma'), O(\sigma'') \big)$ of $T$-orbits such that $e|_{\sigma'} \equiv 0$ and $\sigma'' = \cone(\sigma', \rho_e)$, and each $H_e$-orbit in $O(\sigma') \cup O(\sigma'')$ intersects $O(\sigma')$ in a one-dimensional orbit of the subtorus $\lambda_e$, while the same $H_e$-orbit meets $O(\sigma'')$ in a single point.
\end{enumerate}
\end{theorem}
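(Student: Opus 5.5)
We will work with the Cox construction, in the spirit of the Appendix. Let $R = \KK[x_1, \ldots, x_m]$ be the Cox ring of $X$, graded by $\Cl(X) = A$, where $\deg x_j = a_j$ is the lattice Gale dual element to $p_j$. Let $\widehat{X} \subseteq \AA^m$ be the open subset obtained by removing the irrelevant locus, and let $\pi \colon \widehat{X} \to X$ be the good quotient by the quasitorus $H_X = \Spec \KK[A]$.

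A point $z \in \widehat{X}$ maps to $O(\sigma)$ exactly when $\{ j \mid z_j = 0 \} = \{ j \mid \rho_j \subseteq \sigma \}$. The torus $T$ lifts to the diagonal torus $\Gm^m$ acting on $\widehat{X}$. Under this lift, the one-parameter subgroup $\lambda_e = \lambda_{p_i}$ (with $\rho_i = \rho_e$) is covered by scaling the single coordinate $x_i$.

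First we describe $H_e$ explicitly. Let $e$ be a Demazure root associated with $\rho_i$, and consider the monomial $\mu = \prod_{j \ne i} x_j^{\pairing{p_j}{e}}$. Condition (R1) ensures that all exponents are non-negative. Moreover, $\sum_j \pairing{p_j}{e} a_j = 0$ by the duality, so $\deg \mu = a_i$. Hence
$$
x_i \mapsto x_i + s \mu, \qquad x_j \mapsto x_j \ (j \ne i), \qquad s \in \Ga,
$$
is a $\Ga$-action on $\AA^m$ commuting with $H_X$. Condition (R2) is exactly what ensures that it preserves $\widehat{X}$. The action therefore descends to a $\Ga$-action on $X$ normalized by $T$ with weight $e$. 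By the bijection of Theorem~\ref{teo1} (which is proved along the same lines), this descended action is $H_e$. I expect this identification of $H_e$ with the lifted action to be the main technical point; the rest is bookkeeping.

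Next we compute orbits. Take $z \in \widehat{X}$ with $\pi(z) \in O(\sigma)$. If $\mu(z) = 0$, the $\Ga$-orbit of $z$ is a single point, so $O(\sigma)$ consists of $H_e$-fixed points. Otherwise $z_j \ne 0$ for every $j \ne i$ with $\pairing{p_j}{e} > 0$, that is, $e$ vanishes on every ray of $\sigma$ other than $\rho_i$.

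Define $\sigma'$ as the face of $\sigma$ generated by its rays different from $\rho_i$. Then $e|_{\sigma'} \equiv 0$. By (R2) we have $\sigma'' = \cone(\sigma', \rho_i) \in \Sigma$, and $\sigma \in \{ \sigma', \sigma'' \}$.

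The orbit of $z$ is the affine line on which only the coordinate $x_i = z_i + s \mu(z)$ varies, taking every value in $\KK$. The unique point of this line with $x_i = 0$ maps to $O(\sigma'')$. The points with $x_i \ne 0$ map to $O(\sigma')$, and they differ only by scaling $x_i$, so their images form one $\lambda_e$-orbit. This orbit is one-dimensional because $p_i \notin \operatorname{span}(\sigma')$: the ray $\rho_i$ spans a new dimension of the cone $\sigma''$. Since $\pi$ is $H_e$-equivariant, this proves both parts (1) and (2), including the claim that each such $H_e$-orbit meets exactly one other $T$-orbit.
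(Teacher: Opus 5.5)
Your strategy is the same as the paper's: lift $H_e$ to $x_i\mapsto x_i+s\mu$ on $\AA^m$, note that $\lambda_e$ is covered by the coordinate torus scaling $x_i$, and push the orbit computation down along $\pi$. There is, however, one false step, and it matters for non-simplicial fans.

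\textbf{The false step.} You claim that $z\in\widehat X$ lies over $O(\sigma)$ \emph{exactly when} its zero set is $\sigma(1)$. This is true only for simplicial $\Sigma$. In general, $\pi(z)\in O(\tau)$ for the smallest cone $\tau\in\Sigma$ containing $\cone(p_j : z_j=0)$; this is the paper's ``relative interior'' description. Only the implication ``zero set $=\tau(1)$ $\Rightarrow$ $\pi(z)\in O(\tau)$'' holds.

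Because of this, your case split on $\mu(z)$ for an arbitrary lift $z$ fails. Here is a counterexample.
\begin{itemize}
\item Take the quadric cone $X_\sigma$ with $\sigma=\cone(q_1,\dots,q_4)$, where $q_1=(0,0,1)$, $q_2=(1,0,1)$, $q_3=(1,1,1)$, $q_4=(0,1,1)$.
\item Take the root $e=(1,1,-1)\in\RRR_{\rho_1}(\Sigma)$. Then $\mu=x_3$ and $\widehat X=\AA^4$.
\item The point $z=(1,0,1,0)$ lies over the vertex $O(\sigma)$, since $\cone(q_2,q_4)$ is a diagonal of $\sigma$.
\item We have $\mu(z)=1\ne0$. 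Yet $\pairing{q_3}{e}=1>0$, so your inference that ``$e$ vanishes on every ray of $\sigma$ other than $\rho_i$'' is false.
\item The vertex is in fact $H_e$-fixed: the whole line $(1+s,0,1,0)$ lies in a single fibre of $\pi$.
\end{itemize}

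\textbf{The repair.} For $x\in O(\sigma)$, choose $z\in\pi^{-1}(x)$ with zero set exactly $\sigma(1)$. Such $z$ exists because the $\TT$-orbit of these points maps onto $O(\sigma)$. Then:
\begin{itemize}
\item $\mu(z)=0$ if and only if $e$ is positive on some ray of $\sigma$, a condition depending only on $\sigma$.
\item By equivariance, $H_e\cdot x=\pi(\HH_e\cdot z)$.
\item The points of the line $\HH_e\cdot z$ have zero sets $\sigma'(1)$ and $\sigma'(1)\cup\{\rho_i\}=\sigma''(1)$. The latter equality holds because $\sigma'=\sigma''\cap e^{\perp}$ is a face of $\sigma''$.
\end{itemize}
With this, the valid direction of the criterion gives the rest of your argument verbatim, including your justification of one-dimensionality via $p_i\notin\operatorname{span}(\sigma')$. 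This is what the paper does implicitly when it says that the preimages of $O(\sigma')$ and $O(\sigma'')$ \emph{contain} the $\TT$-orbits $O(J')$ and $O(J'')$.
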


Theorems~\ref{teo1} and \ref{teo2} are well-known and can be found in one form or another in \cite[Th\'eor\`eme~3]{De}, \cite[Proposition~3.14]{Oda}, \cite[Section~4]{Cox}, and subsequent publications. For the convenience of the reader, we provide short proofs of these results in terms of Cox rings in the Appendix below.


\section{Combinatorial characterizations}
\label{sec6}


The aim of this section is to characterize uniform toric varieties and toric varieties that are homogeneous in codimension one in combinatorial terms. We also  apply the results of Section~\ref{sec4} to obtain classification results in dimensions~2 and~3.

Let $X$ be a normal algebraic variety and $X^{\reg}$ be the smooth locus of $X$. By $\Eff(X)$ we denote the monoid in $\Cl(X)$ generated by classes of effective Weil divisors on $X$.  For a point $x \in X$, let $\Gamma(x)$ be the monoid generated by classes of effective divisors whose support does not contain~$x$.

\begin{lemma} \label{lemma1}
Let $X$ be a non-degenerate toric variety and $x \in X^\reg$. Consider an effective divisor $D$ on $X$ whose support does not contain $x$. Then there is an effective $T$-invariant divisor $D'$ on $X$, which is linearly equivalent to $D$ and whose support does not contain $x$.
\end{lemma}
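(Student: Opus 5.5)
The plan is to reduce to the standard description of global sections of a $T$-invariant divisorial sheaf as a direct sum of $T$-eigenlines, and to use regularity of $x$ only to trivialize the sheaf locally at $x$.

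\emph{Reduction to a section of $\OOO(D_0)$.} Write $D$ as linearly equivalent to a $T$-invariant divisor $D_0 = \sum_i c_i D_i$; this is possible because the classes of the $D_i$ generate $\Cl(X)$ for any toric variety. So $D = D_0 + \operatorname{div}(f)$ for some rational function $f \in \KK(X)^\times$, and effectivity of $D$ means $f \in H^0(X, \OOO(D_0))$. The restriction of $f$ to the open orbit $\OOO \cong T$ is regular, since $D_0$ is supported off $\OOO$ and $D$ is effective. Hence $f$ is a Laurent polynomial:
$$
f = \sum_{u \in S} c_u \chi^u, \qquad S \subset M \text{ finite}, \ c_u \neq 0 .
$$
Because $\OOO(D_0)$ is $T$-linearized, $H^0(X,\OOO(D_0))$ is $T$-stable, so each $\chi^u$ with $u \in S$ is itself a section. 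Concretely, $\pairing{p_i}{u} \ge -c_i$ for all $i$, and this can be checked directly from the valuations of $f$ along the $D_i$. Therefore each divisor $D_u := D_0 + \operatorname{div}(\chi^u)$ is effective, $T$-invariant and linearly equivalent to $D$.

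\emph{Using regularity of $x$.} Let $x \in O(\sigma)$. Since $x \in X^\reg$, the cone $\sigma$ is regular, so $X_\sigma \cong \AA^k \times \Gm^{n-k}$ and every Weil divisor on $X_\sigma$ is Cartier. In particular there is $u_\sigma \in M$ with $D_0|_{X_\sigma} = \operatorname{div}(\chi^{-u_\sigma})|_{X_\sigma}$: take $u_\sigma$ with $\pairing{p_i}{u_\sigma} = c_i$ for the rays $\rho_i \subset \sigma$, which exists since these $p_i$ extend to a basis of $N$. On $X_\sigma$ the condition "$x \notin \operatorname{supp}(D_0 + \operatorname{div}(g))$" then becomes "$\chi^{u_\sigma} g$ is a regular function on $X_\sigma$ not vanishing at $x$".

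\emph{Conclusion.} Apply this to $g = f$. Then $\chi^{u_\sigma} f = \sum_{u \in S} c_u \chi^{u+u_\sigma}$ is regular on $X_\sigma$ and nonzero at $x$. Each summand $\chi^{u+u_\sigma}$ is a regular function on $X_\sigma$, because $\chi^u$ is a section of $\OOO(D_0)$. Since the sum does not vanish at $x$, some summand satisfies $\chi^{u+u_\sigma}(x) \neq 0$, and $D' := D_u$ has the required property.

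The main obstacle is bookkeeping rather than a genuine difficulty: one must justify that the individual monomials $\chi^u$ are sections of $\OOO(D_0)$, i.e.\ the eigenspace decomposition of the section space. This can be done directly via valuations along the $D_i$, avoiding finite-dimensionality, which fails for non-complete $X$. One must also check that the Cartier trivialization on $X_\sigma$ turns "support avoids $x$" into "nonvanishing at $x$". Non-degeneracy is not essential here. At most it is used so that the $D_i$ give the standard presentation of $\Cl(X)$; the argument does not otherwise need it.
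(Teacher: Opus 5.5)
Your proof is correct and is essentially the paper's argument. The paper restricts to $X^{\reg}$ and cites Lemma~6 of [Ar], which rests on a $T$-linearization of $\OOO(D)$, and hence on the same splitting of the section defining $D$ into $T$-eigensections, one of which must be nonzero at $x$; you carry this out explicitly with the monomials of $f$ and the trivialization of $D_0$ on the regular chart $X_\sigma$.

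There is one small sign slip. With $\pairing{p_i}{u_\sigma}=c_i$ you have $D_0|_{X_\sigma}=\operatorname{div}(\chi^{u_\sigma})|_{X_\sigma}$, not $\operatorname{div}(\chi^{-u_\sigma})|_{X_\sigma}$, and the former is what your next sentence actually uses.
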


\begin{proof}
It suffices to prove the statement for $X = X^{\reg}$, and it is done, e.g., in \cite[Lemma~6]{Ar}.
\end{proof}

Since a toric variety $X$ contains an open $T$-orbit $\OOO$, the automorphism group $\Aut(X)$ acts on $X$ with an open orbit $\OOOO$ which contains the orbit $\OOO$. By Lemma~\ref{lemma1}, if $x \in O(\sigma)$ for some regular cone $\sigma \in \Sigma$, then the monoid $\Gamma(x)$ is generated by classes of $T$-invariant prime divisors $D_{\rho}$, where $\rho \notin \sigma(1)$. In particular, $\Gamma(x) = \Eff(X)$ for any $x \in \OOO$.

\begin{lemma} \label{lemma2}
Let $\rho \in \Sigma(1)$. Then the class $[D_\rho]$ is in the monoid generated by other classes $[D_{\rho_1}], \ldots, [D_{\rho_s}]$ if and only if there is a vector $e \in M$ with
$$
\pairing{p_\rho}{e} = -1, \quad
\pairing{p_{\rho_i}}{e} \ge 0, \ i = 1, \ldots, s,
\insertText{and}
\pairing{p_{\rho'}}{e} = 0 \insertText{for all}
\rho' \ne \rho, \rho_1, \ldots \rho_s.
$$
\end{lemma}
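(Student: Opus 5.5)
The plan is to use the standard exact sequence for the divisor class group of a non-degenerate toric variety,
$$
0 \to M \to \ZZ^{\Sigma(1)} \to \Cl(X) \to 0, \qquad u \mapsto \sum_{\rho'} \pairing{p_{\rho'}}{u} D_{\rho'},
$$
which is exactly sequence~(\ref{seq4}) with $A = \Cl(X)$ and $a_i = [D_{\rho_i}]$. Non-degeneracy of $X$ gives injectivity of $M \to \ZZ^m$, and the quotient is $\Cl(X)$ because every Weil divisor is linearly equivalent to a $T$-invariant one. The key fact I will use is that two $T$-invariant divisors $\sum c_{\rho'} D_{\rho'}$ and $\sum c'_{\rho'} D_{\rho'}$ are linearly equivalent if and only if their difference equals $\operatorname{div}(\chi^e) = \sum \pairing{p_{\rho'}}{e} D_{\rho'}$ for some $e \in M$.

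For the ``only if'' direction, I will suppose $[D_\rho] = \sum_{i=1}^s c_i [D_{\rho_i}]$ with integers $c_i \ge 0$. Then the divisor $\sum c_i D_{\rho_i} - D_\rho$ is principal and $T$-invariant, so it equals $\operatorname{div}(\chi^e)$ for some $e\in M$. Reading off coefficients gives three facts:
\begin{itemize}
\item[] $\pairing{p_\rho}{e} = -1$;
\item[] $\pairing{p_{\rho_i}}{e} = c_i \ge 0$;
\item[] $\pairing{p_{\rho'}}{e} = 0$ for every other ray $\rho'$.
\end{itemize}
One small point needs care: the rays $\rho_1,\dots,\rho_s$ should be distinct from $\rho$ and from each other. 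If repetitions occur, I will first merge the corresponding coefficients.

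For the ``if'' direction, I will take $e$ as in the statement and compute
$$
\operatorname{div}(\chi^e) = -D_\rho + \sum_{i} \pairing{p_{\rho_i}}{e} D_{\rho_i}.
$$
Since this divisor is principal, $[D_\rho] = \sum \pairing{p_{\rho_i}}{e}[D_{\rho_i}]$ holds with non-negative integer coefficients. This places $[D_\rho]$ in the monoid generated by the classes $[D_{\rho_1}],\dots,[D_{\rho_s}]$.

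I do not expect a real obstacle. The only step needing care is justifying the exactness of the class group sequence, namely that principal $T$-invariant divisors are exactly the divisors of characters. For this I can cite \cite[Theorem~4.1.3]{CLS}, or argue directly: a rational function whose divisor is $T$-invariant is a $T$-semi-invariant, hence a scalar multiple of a character on the open orbit $\OOO$. Equivalently, the statement can be viewed as a direct instance of the lattice Gale duality from Section~\ref{sec3}, applied to the configuration $\{p_{\rho'}\}$ and its transform $\{[D_{\rho'}]\}$ in $\Cl(X)$.
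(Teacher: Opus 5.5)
Your proposal is correct and follows essentially the same route as the paper. The paper also relies on \cite[Theorem~4.1.3]{CLS}: every relation among the classes $[D_i]$ in $\Cl(X)$ has the form $\sum_i \pairing{p_i}{u}[D_i]=0$ with $u\in M$. It then reads off the coefficients of a relation $[D_\rho]=\sum_i \alpha_i[D_{\rho_i}]$ with $\alpha_i\ge 0$, exactly as you do in both directions.
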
 

\begin{proof}
By \cite[Section~3.4]{Fu} or \cite[Theorem~4.1.3]{CLS} all relations between the classes $[D_i]$ in $\Cl(X)$ have the form
$\sum_i \pairing{p_i}{u} [D_i] = 0$,
where $u\in M$. A class $[D_\rho]$ is in the monoid generated by classes $[D_{\rho_1}], \ldots, [D_{\rho_s}]$ if and only if $[D_\rho] = \sum_{i = 1}^s \alpha_i [D_{\rho_i}]$
for some non-negative integers $\alpha_i$. Existence of such a relation is equivalent to existence of the desired vector~${e \in M}$.
\end{proof}

\begin{theorem} \label{main}
Let $X$ be a non-degenerate toric variety whose fan has convex support. Then the open orbit $\OOOO$ of the group $\Aut(X)$ is the set of points $x \in X^{\reg}$ with $\Gamma(x) = \Eff(X)$.
\end{theorem}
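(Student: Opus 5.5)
The plan is to show that the set
$$
S = \{ x \in X^{\reg} \mid \Gamma(x) = \Eff(X) \}
$$
satisfies $\OOOO \subseteq S$ (invariance) and $S \subseteq \OOOO$ (every point of $S$ can be moved into $\OOO$ by root subgroups).

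\emph{Invariance and $\OOOO \subseteq S$.} Any automorphism $\varphi$ of $X$ preserves $X^{\reg}$. It induces an automorphism $\varphi^*$ of $\Cl(X)$ that maps effective divisors to effective divisors, so $\varphi^*$ preserves $\Eff(X)$. A divisor $D$ avoids $\varphi(x)$ if and only if $\varphi^*D$ avoids $x$, so $\varphi^*$ maps $\Gamma(\varphi(x))$ onto $\Gamma(x)$. Hence $\Gamma(x) = \Eff(X)$ if and only if $\Gamma(\varphi(x)) = \Eff(X)$, and $S$ is $\Aut(X)$-invariant. By the remark after Lemma~\ref{lemma1}, $\Gamma(x) = \Eff(X)$ for $x \in \OOO$, and $\OOO \subseteq X^{\reg}$, so $\OOO \subseteq S$. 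Since $\OOOO$ is the $\Aut(X)$-orbit of $\OOO$, invariance gives $\OOOO \subseteq S$.

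\emph{The inclusion $S \subseteq \OOOO$.} Let $x \in S$. Then $x \in O(\sigma)$ for some cone $\sigma$, which is regular because $x$ is smooth. I would argue by induction on $\dim \sigma$ that $x$ lies in the $\Aut(X)$-orbit of $\OOO$; the case $\sigma = \{0\}$ is trivial. By the remark after Lemma~\ref{lemma1}, $\Gamma(x)$ is generated by the classes $[D_{\rho'}]$ with $\rho' \notin \sigma(1)$. Take any $\rho \in \sigma(1)$. Since $[D_\rho] \in \Eff(X) = \Gamma(x)$, Lemma~\ref{lemma2}, applied with $\rho_1, \ldots, \rho_s$ the rays outside $\sigma$, gives $e \in M$ with
$$
\pairing{p_\rho}{e} = -1, \qquad \pairing{p_{\rho'}}{e} \ge 0 \ \text{ for } \rho' \notin \sigma(1), \qquad \pairing{p_{\rho''}}{e} = 0 \ \text{ for } \rho'' \in \sigma(1) \setminus \{\rho\}.
$$
This $e$ satisfies (R1), and (R2) holds automatically because the support is convex. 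So $e$ is a Demazure root with $\rho_e = \rho$, and Theorem~\ref{teo1} gives a root subgroup $H_e$.

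\emph{Applying Theorem~\ref{teo2}.} Put $\tau = \cone(\sigma(1) \setminus \{\rho\})$. Because $\sigma$ is regular (hence simplicial), $\tau$ is a face of $\sigma$, $e|_\tau \equiv 0$, and $\sigma = \cone(\tau, \rho_e)$. Thus $(O(\tau), O(\sigma))$ is exactly a pair as in Theorem~\ref{teo2}(2). I expect the main obstacle to be checking that $O(\sigma)$ falls into the non-fixed case of Theorem~\ref{teo2}(1) rather than consisting of $H_e$-fixed points. For this I would use the description in part~(2) directly: the $H_e$-orbits in $O(\tau) \cup O(\sigma)$ meet $O(\sigma)$ in single points and $O(\tau)$ in curves, so these orbits are nontrivial and cover $O(\sigma)$. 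If that reading of part~(2) is not enough, I would fall back on the Cox ring description in the Appendix. Granting this, the $H_e$-orbit of $x$ meets $O(\tau)$ in some point $x'$. By invariance $x' \in S$, and $\tau$ is regular of dimension $\dim \sigma - 1$. By induction $x'$, and hence $x$, lies in $\Aut(X) \cdot \OOO = \OOOO$.

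This gives $S \subseteq \OOOO$, and combined with $\OOOO \subseteq S$ we get $S = \OOOO$.
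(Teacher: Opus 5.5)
Your proposal is correct and follows the paper's proof essentially step by step. Invariance of the condition $\Gamma(x)=\Eff(X)$ gives $\OOOO\subseteq S$; induction on $\dim\sigma$ over regular cones gives the reverse inclusion, with Lemmas~\ref{lemma1} and~\ref{lemma2} producing a root $e\in\RRR_\rho(\Sigma)$ that vanishes on the facet $\cone(\sigma(1)\setminus\{\rho\})$ and Theorem~\ref{teo2} connecting $O(\sigma)$ to the orbit of that facet. The point you flag, namely that such a pair of orbits falls into the non-fixed case of Theorem~\ref{teo2}, is also used implicitly by the paper, and it is exactly what the Appendix proof of Theorem~\ref{teo2} establishes through the condition $I(\HH_e)\subseteq J$.
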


\begin{proof}
Clearly, the set of points $x \in X$ such that $\Gamma(x) = \Eff(X)$ is $\Aut(X)$-invariant. Since we have $\Gamma(x) = \Eff(X)$ for any point $x \in \OOO$, we conclude that $\Gamma(x) = \Eff(X)$ for any point $x \in \OOOO$. Conversely, let $\Gamma(x) = \Eff(X)$ for some $x \in O(\sigma)$, where $\sigma \in \Sigma$ is a regular cone. Let us show by induction on $\dim(\sigma)$ that $x \in \OOOO$. The base of induction corresponds to the case $\sigma = \{0\}$, i.e. $O(\sigma) = \OOO$, and we know that $\OOO \subseteq \OOOO$. Now assume that $\sigma \ne \{0\}$ and $\rho \in \sigma(1)$.

It follows from Lemma~\ref{lemma1} that the class $[D_\rho]$ equals
$\sum_i \alpha_{\rho_i} [D_{\rho_i}]$,
where $\rho_i \notin \sigma(1)$ and $\alpha_{\rho_i}$ are non-negative integers. By Lemma~\ref{lemma2} there is a root $e \in \RRR_\rho(\Sigma)$ such that $\sigma = \cone(\rho, \sigma')$ with $\sigma' \in \Sigma$ and $e|_{\sigma'} \equiv 0$. From Theorem~\ref{teo2} we conclude that the subset $O(\sigma) \cup O(\sigma')$ is contained in an $\Aut(X)$-orbit. In particular, $\Gamma(x) = \Eff(X)$ for all $x \in O(\sigma')$. By inductive hypothesis, the orbit $O(\sigma')$ is contained in $\OOOO$. Therefore, $O(\sigma) \subseteq \OOOO$.
\end{proof}

For a non-degenerate toric variety $X$, we have two dual exact sequences 
\begin{equation}
\begin{tikzcd}
0 \ar{r} & L \ar{r} &
\ZZ^m \ar{r}{\alpha} & N
\end{tikzcd}
\end{equation}
and
\begin{equation}
\hspace{4.3em}
\begin{tikzcd}
0  & A \ar{l} &
\ZZ^m \ar[swap]{l}{\beta} & M \ar{l} & 0 \ar{l},
\end{tikzcd}
\end{equation}
where $A = \Cl(X)$. Let us denote by $p(\Sigma)$ the set $\{p_1, \ldots, p_m\}$ of primitive lattice vectors on the rays of the fan $\Sigma$. The Gale dual configuration $\AAA =\AAA(\Sigma) = \{a_1, \ldots, a_m\}$ in $A$ to the configuration $p(\Sigma)$ in $N$ is the configuration of classes $[D_1], \ldots, [D_m]$ of $T$-invariant prime divisors in $\Cl(X)$. The monoid $\Eff(X)$ is identified with the monoid $A_+$ generated by $\AAA$, and $\Gamma(x)$ for a point $x \in O(\sigma)$ is the monoid $\Gamma(\sigma)$ generated by $a_s$ with $p_s \notin \sigma$.

\begin{corollary} \label{uniform}
Let $X$ be a non-degenerate toric variety whose fan has convex support. Then $X$ is uniform if and only if for any cone $\sigma\in\Sigma$ the condition that $\Gamma(\sigma)$ generates the group~$A$ implies $\Gamma(\sigma) = A_+$.
\end{corollary}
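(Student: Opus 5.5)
The plan is to reduce Corollary~\ref{uniform} to Theorem~\ref{main}, using the dictionary between points of $X$ and cones of $\Sigma$ set up just before the statement. By definition, $X$ is uniform if and only if the open orbit $\OOOO$ of $\Aut(X)$ equals $X^{\reg}$. Theorem~\ref{main} identifies $\OOOO$ with the set of $x\in X^{\reg}$ satisfying $\Gamma(x)=\Eff(X)$. So uniformity means exactly that $\Gamma(x)=\Eff(X)$ for every $x\in X^{\reg}$. Since $\Gamma(x)$ depends only on the $T$-orbit $O(\sigma)$ containing $x$, and under the identification $\Eff(X)=A_+$ it equals $\Gamma(\sigma)$, this becomes the statement that $\Gamma(\sigma)=A_+$ for every regular cone $\sigma\in\Sigma$. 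Here I use that $O(\sigma)\subseteq X^{\reg}$ if and only if $\sigma$ is regular.

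The remaining step is to show that $\sigma$ is regular if and only if $\Gamma(\sigma)$ generates $A$. I would get this from Remark~\ref{remus}. The set $\{p_i : p_i\in\sigma\}$ extends to a basis of $N$ if and only if $\{a_j : p_j\notin\sigma\}$ generates $A$, and the latter set is exactly the generating set of $\Gamma(\sigma)$. Regularity of $\sigma$ means the primitive ray generators of $\sigma$ extend to a basis of $N$. In particular they are then linearly independent, so $\sigma$ is simplicial, and its rays are precisely the $p_i$ lying in $\sigma$, because a cone in a fan contains only those rays of $\Sigma$ that are its own rays.

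One subtlety needs care: Remark~\ref{remus} is phrased for a subset of $\NNN$, and it requires $p(\Sigma)$ to consist of pairwise distinct primitive vectors, which holds by construction. Substituting this equivalence gives the statement. $X$ is uniform if and only if, for every $\sigma\in\Sigma$ such that $\Gamma(\sigma)$ generates $A$, we have $\Gamma(\sigma)=A_+$.

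I expect no real obstacle. The only point to check carefully is the regularity criterion via Remark~\ref{remus}, in particular that the group generated by $\Gamma(\sigma)$ is all of $A$, not merely a finite-index subgroup, exactly when $\sigma$ is regular.
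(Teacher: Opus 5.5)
Your proposal is correct and follows the paper's proof: you reduce via Theorem~\ref{main} to the condition $\Gamma(\sigma)=A_+$ for all regular cones $\sigma$, and then use the criterion that $\sigma$ is regular if and only if $\Gamma(\sigma)$ generates $A$. The paper proves this criterion directly as Lemma~\ref{lreg}, using the same dual-basis argument that underlies Remark~\ref{remus}, so applying the remark with $I=\{i : p_i\in\sigma\}$ is an equivalent way to obtain it.
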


\begin{proof}
Since $X^\reg = \bigcup O(\sigma)$, where $\sigma$ runs through all regular cones $\sigma \in \Sigma$, Theorem~\ref{main} implies that the variety $X$ is uniform if and only if $\Gamma(\sigma) = A_+$ for all regular cones $\sigma \in \Sigma$.

\begin{lemma} \label{lreg}
A cone $\sigma \in \Sigma$ is regular if and only if the monoid $\Gamma(\sigma)$ generates the group~$A$.
\end{lemma}

\begin{proof}
A cone $\sigma$ is regular if and only if for any $p_i \in \sigma$ there is $u \in M$ such that $\pairing{p_i}{u} = 1$ and $\pairing{p_j}{u} = 0$ for all $p_i \ne p_j \in \sigma$. This is equivalent to the fact that any $a_i$ with $p_i \in \sigma$ is contained in the subgroup generated by all $a_s$ with $p_s \notin \sigma$.
\end{proof}

This completes the proof of Corollary~\ref{uniform}.
\end{proof}

\begin{remark}
We observe that a toric variety $X$ is smooth if and only if for any $\sigma \in \Sigma$ the monoid $\Gamma(\sigma)$ generates the group $A$.
\end{remark}

The following result is essentially obtained in~\cite[Theorem~2.1]{AKZ}; see also \cite[Theorem~4.3]{ASZ}. Here we give a short proof which uses the technique developed in this paper.

\begin{proposition}
Any quasi-affine toric variety $X$ is uniform.
\end{proposition}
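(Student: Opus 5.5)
The plan is to bypass Corollary~\ref{uniform}, which needs convex support, and instead rerun the inductive argument from the proof of Theorem~\ref{main} directly. Using the quasi-affine structure of the fan, for every regular cone I will construct a Demazure root that moves its $T$-orbit into the orbit of a smaller face.

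First, the reduction to the non-degenerate case. If $X$ is degenerate, write $X = X' \times T_0$ with $X'$ a non-degenerate $T'$-toric variety. Then $X'$ is again quasi-affine, being a closed subvariety $X' \times \{1\}$ of $X$. Also $X^{\reg} = X'^{\reg} \times T_0$, and $\Aut(X') \times T_0 \subseteq \Aut(X)$. Hence uniformity of $X'$ implies uniformity of $X$, and we may assume $X$ is non-degenerate.

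Since $X$ is quasi-affine and toric, it is a $T$-invariant open subset of an affine toric variety $X_\omega$. So $\Sigma$ is a subfan of the fan of faces of a strictly convex cone $\omega$, and every cone of $\Sigma$ is a face of $\omega$. Fix a regular cone $\sigma \in \Sigma$ and a ray $\rho \in \sigma(1)$, and let $\sigma'$ be the face of $\sigma$ spanned by the remaining rays, so that $\sigma = \cone(\sigma', \rho)$.
\begin{enumerate}
\item By regularity of $\sigma$, choose $u \in M$ with $\pairing{p_\rho}{u} = -1$ and $\pairing{p}{u} = 0$ for the primitive vectors $p$ of the other rays of $\sigma$.
\item Since $\sigma$ is a face of $\omega$, there is $m_0 \in \omega^\vee \cap M$ that vanishes on $\sigma$ and is strictly positive on every ray of $\omega$ not in $\sigma$.
\item Put $e = u + k m_0$ with $k \gg 0$. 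Then $e$ is $-1$ on $p_\rho$, is $0$ on the other rays of $\sigma$, and is $\ge 1$ on all rays of $\Sigma$ outside $\sigma$. So (R1) holds.
\end{enumerate}

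Next, check (R2), which is where the missing convex support must be compensated. Let $\tau \in \Sigma$ satisfy $e|_\tau \equiv 0$. Every ray of $\tau$ is then a ray of $\sigma$ different from $\rho$. Since $\tau$ and $\sigma$ are both faces of $\omega$, $\tau$ is a face of $\sigma'$. Then $\cone(\tau,\rho)$ is a face of $\sigma$, hence belongs to $\Sigma$. Thus $e \in \RRR_\rho(\Sigma)$ and $e|_{\sigma'} \equiv 0$.

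Now apply Theorem~\ref{teo2} to the root subgroup $H_e$ from Theorem~\ref{teo1}. It shows that $O(\sigma) \cup O(\sigma')$ lies in a single $\Aut(X)$-orbit, with $\sigma'$ again regular and $\dim \sigma' = \dim \sigma - 1$. By induction on $\dim\sigma$, every orbit $O(\sigma)$ with $\sigma$ regular lies in the $\Aut(X)$-orbit of the open $T$-orbit $\OOO$. Since $X^{\reg}$ is the union of such $O(\sigma)$, the group $\Aut(X)$ is transitive on $X^{\reg}$.

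I expect the main obstacle to be the verification of (R2), since no convexity of $|\Sigma|$ is available. The argument above handles it because every cone of $\Sigma$ is a face of the single cone $\omega$, and because $m_0$ can be chosen strictly positive off $\sigma$.
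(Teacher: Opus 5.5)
Your proof is correct and is essentially the paper's argument. You take $u$ adapted to the regular cone $\sigma=\cone(\sigma',\rho)$, add a large multiple of a functional $m_0\in\omega^\vee\cap M$ that cuts out $\sigma$ in the ambient cone $\omega$, and use the resulting root subgroup $H_e$ to connect $O(\sigma)$ with $O(\sigma')$, inducting on $\dim\sigma$. Your explicit check of (R2) via simpliciality of $\sigma$ and your justification of the reduction to the non-degenerate case fill in steps the paper states only briefly.
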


\begin{proof}
We may assume that $X$ is non-degenerate. Again it suffices to show that any non-open orbit $O(\sigma)$ with a regular $\sigma \in \Sigma$ is connected by some root subgroup $H_e$ with an orbit $O(\sigma')$ of higher dimension. Take a facet $\sigma'$ of $\sigma$ with $\sigma = \cone(\rho, \sigma')$. 
There is a vector $e' \in M$ with $\pairing{p_\rho}{e'} = -1$ and $e'|_{\sigma'} \equiv 0$.

Let us recall that $X$ is quasiaffine if and only if $\Sigma$ is a subfan of some cone $\widehat{\sigma}$; see, e.g.,~\cite[Theorem~1.6]{PV}. So we can find an element $l \in M$ which is non-negative on $\widehat{\sigma}$ and vanishes precisely on $\sigma$ in $\widehat{\sigma}$. Letting $e = e' + tl$ with $t\gg 0$ we have 
$\pairing{p_\rho}{e} = -1$,
$\pairing{p_{\rho'}}{e} > 0$
for $\rho' \notin \sigma$, and $e|_{\sigma'} \equiv 0$. Since the cone $\sigma = \cone(\rho, \sigma')$ has convex support, condition (R2) is fulfilled for~$e$ and the connecting root subgroup $H_e$ exists.
\end{proof}

\begin{remark}
One can check that a toric variety $X$ is quasiaffine if and only if for any cone $\sigma\in\Sigma$ the cone generated by vectors $a_i\otimes 1$, where $p_i\notin\sigma$, is a subspace of $A_{\QQ}$. 
\end{remark}

\begin{remark}
Let $\sigma''\in\Sigma$ and $\sigma'$ be a facet of $\sigma''$. The orbits $O(\sigma')$ and $O(\sigma'')$ are connected by some root subgroup $H_e$ in the sense of Theorem~\ref{teo2}.(2) if and only if $\Gamma(\sigma')=\Gamma(\sigma'')$, cf.~\cite[Lemma~3.3]{Ba}: this property holds both for toric varieties
whose fan $\Sigma$ has convex support and for quasiaffine toric varieties; see~\cite[Proposition~2.7]{AB} for the affine version. 
\end{remark}

\begin{proposition} \label{codim2}
Let $X$ be a non-degenerate toric variety whose fan has convex support. Then $X$ is homogeneous in codimension one if and only if the configuration $p(\Sigma)$ in $N$ is suitable. 
\end{proposition}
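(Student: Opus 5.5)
The plan is to use Theorem~\ref{main}, which identifies the open $\Aut(X)$-orbit $\OOOO$ with the set of points $x \in X^\reg$ satisfying $\Gamma(x) = \Eff(X)$. The complement of $\OOOO$ contains a divisor if and only if it contains some $T$-invariant prime divisor $D_\rho$. Indeed, $\OOOO$ is $T$-stable, so its complement is a closed $T$-stable subset, and any divisorial component of it is $T$-invariant. Such a component must be some $D_\rho$ with $\rho \in \Sigma(1)$. Thus $X$ is homogeneous in codimension one if and only if for every ray $\rho$ the orbit $O(\rho)$ meets, and hence is contained in, $\OOOO$.

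Next, take a ray $\rho$. The cone $\rho$ is regular, since $p_\rho$ is primitive, so $O(\rho) \subseteq X^\reg$. For $x \in O(\rho)$ we have $\Gamma(x) = \Gamma(\rho)$, the monoid generated by the classes $a_s$ with $s$ different from the index of $\rho$. Since $\Eff(X) = A_+$ is generated by all the $a_i$, the condition $\Gamma(\rho) = A_+$ holds exactly when $a_\rho$ lies in the monoid generated by $\AAA \setminus \{a_\rho\}$. Therefore $X$ is homogeneous in codimension one if and only if the multiset $\AAA(\Sigma)$ is admissible.

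Finally, admissibility of $\AAA$ is equivalent to suitability of $p(\Sigma)$, as noted after the definition of an admissible multiset. Alternatively, I would argue directly with Lemma~\ref{lemma2}, taking $\rho_1, \ldots, \rho_s$ to be all the other rays. Then $a_\rho$ lies in the monoid generated by the remaining classes exactly when there is $e \in M$ with $\pairing{p_\rho}{e} = -1$ and $\pairing{p_{\rho'}}{e} \ge 0$ for all $\rho' \ne \rho$, which is the definition of suitability.

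The step needing the most care is the first one, because it reduces divisors in the complement to the orbits $O(\rho)$. One must check that the $T$-stability of $\OOOO$ makes the complement's divisorial components $T$-invariant. One must also check that convex support is used only through Theorem~\ref{main}. Equivalently, one can argue via root subgroups: condition (R2) is automatic under convex support, so suitability yields roots $e \in \RRR_\rho(\Sigma)$ with $e|_{\{0\}} \equiv 0$. By Theorem~\ref{teo2}, $H_e$ then connects $O(\rho)$ to $\OOO$, which gives a second proof of sufficiency.
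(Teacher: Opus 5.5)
Your proof is correct and is essentially the paper's argument. Necessity goes through $\Gamma(x)=\Eff(X)$ on $O(\rho)$, Lemma~\ref{lemma1} and Lemma~\ref{lemma2}, and sufficiency, whether via Theorem~\ref{main} or directly, rests on a root $e\in\RRR_\rho(\Sigma)$ whose root subgroup joins $O(\rho)$ to the open $T$-orbit, exactly as in the paper; the only addition is your explicit check that divisorial components of the complement of $\OOOO$ are $T$-invariant divisors $D_\rho$, which the paper takes for granted.
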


\begin{proof} 
The variety $X$ is homogeneous in codimension one if and only if the open orbit $\OOOO$ of $\Aut(X)$ intersects all $T$-invariant prime divisors on $X$. If $p(\Sigma)$ is suitable, then for any ray $\rho\in\Sigma(1)$ there is a Demazure root $e\in\RRR_{\rho}(\Sigma)$, and the root subgroup $H_e$ connects the open $T$-orbit $O(\rho)$ in the divisor $D_{\rho}$ with $\OOO$, so the orbit $\OOOO$ intersects $D_{\rho}$. 

Conversely, if $\OOOO$ intersects $D_{\rho}$, then $\Gamma(x)=\Eff(X)$ for a general point $x\in D_{\rho}$. By Lemma~\ref{lemma1}, the class $[D_{\rho}]$ is in the monoid generated by other classes of $T$-invariant prime divisors. By Lemma~\ref{lemma2}  this implies that there is a root $e\in\RRR_{\rho}(\Sigma)$. We conclude that the configuration $p(\Sigma)$ is suitable. 
\end{proof}

\begin{remark} 
It follows from Remark~\ref{rho} that for any complete homogeneous in codimension one  toric variety $X$ the rank of the divisor class group
$\Cl(X)$ does not exceed $\dim X$, and by Proposition~\ref{SEC.le} the equality holds only for $X=(\PP^1)^n$.  At the same time, any affine toric variety $X$ is homogeneous in codimension one, but starting from dimension three one cannot bound the rank of $\Cl(X)$ by a function in $\dim X$.
\end{remark}

\begin{remark}
Let us show that the assumption that the fan $\Sigma$ has convex support is essential. Let $X$ be obtained from $\PP^1\times\PP^1$ by removing four $T$-fixed points. This corresponds to removing cones of maximal dimension from the fan of $\PP^1\times\PP^1$.  It can be deduced from \cite[Theorem~4.2.4.1]{ADHL}  that the group $\Aut(X)$ is the group of automorphisms of $\PP^1\times\PP^1$ that preserve the set of $T$-fixed points. It is easy to see that this group is a finite extension of the acting torus $T$. We conclude that the variety $X$ is not homogeneous in codimension one, and Theorem~\ref{main}, Corollary~\ref{uniform}, and Proposition~\ref{codim2} do not hold for $X$.  
\end{remark} 

Finally, let us show that the results of this section can be applied to degenerate toric varieties as well. 

\begin{proposition}
Let $X = X_\Sigma$ be a degenerate toric variety whose fan has convex support. Consider a toric decomposition $X = X' \times T_0$ with $T = T' \times T_0$, where $X'$ is a non-degenerate toric $T'$-variety. Then $X$ is homogeneous in codimension one or uniform if and only if $X'$~is.
\end{proposition}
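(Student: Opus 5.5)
The plan is to reduce everything to the combinatorial criteria already established, namely Proposition~\ref{codim2} for homogeneity in codimension one and Corollary~\ref{uniform} for uniformity. We then check that these criteria are insensitive to the factor $T_0$.

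First, we set up the decomposition on the level of fans. Write $N = N' \oplus N_0$, where $N_0$ is the lattice of one-parameter subgroups of $T_0$ and $N'$ that of $T'$. The fan $\Sigma$ lives in $N'$, i.e. every cone $\sigma\in\Sigma$ lies in $N'_\QQ$, and $\Sigma$ is also the fan $\Sigma'$ of $X'$ in $N'$. Dually, $M = M' \oplus M_0$, and the rays and primitive vectors $p_i$ of $\Sigma$ and $\Sigma'$ coincide. Convexity of the support passes from $\Sigma$ to $\Sigma'$.

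Second, we compare the Gale data. Since the map $\ZZ^m\to N$ factors through $N'$, the kernel $L$ is the same. The sequence for $M$ differs from that for $M'$ only by the summand $M_0$, which maps to zero in $\ZZ^m$. Hence $A=\Cl(X)=\Cl(X')$, and the classes $a_i$ are the same. This is just the statement that $\Cl(X'\times T_0)=\Cl(X')$, since $\KK[T_0]$ is factorial.

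The combinatorial conditions then transfer directly. Suitability of $p(\Sigma)$ in $N$ and of $p(\Sigma')$ in $N'$ are equivalent. Given $u'\in M'$ as in the definition, $u=(u',0)$ works in $M$. Conversely, restricting $u\in M$ to $N'$ works, since all $p_j$ lie in $N'$. Likewise the monoids $\Gamma(\sigma)$ and $A_+$, and the condition that $\Gamma(\sigma)$ generates $A$, are the same for $X$ and $X'$ because $A$ and $\AAA$ coincide.

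The main obstacle is that Proposition~\ref{codim2} and Corollary~\ref{uniform} are stated only for non-degenerate $X$. Their proofs rely on Theorems~\ref{teo1} and~\ref{teo2}, which may fail for degenerate varieties, since $\Aut(T_0)$-type automorphisms or automorphisms mixing the factors appear. I would avoid this by a direct geometric argument rather than redoing the theory.

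For the ``if'' direction, $\Aut(X')\times T_0\subseteq\Aut(X)$. So if the open $\Aut(X')$-orbit $\OOOO'$ in $X'$ has complement of codimension $\ge 2$ (resp. equals $X'^\reg$), then the $\Aut(X)$-orbit containing $\OOOO'\times T_0$ has the same property, using $X^\reg = X'^\reg\times T_0$.

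For the ``only if'' direction, the invariance arguments of Theorem~\ref{main} still work. The set of $x$ with $\Gamma(x)=\Eff(X)$ is $\Aut(X)$-invariant. Lemmas~\ref{lemma1} and~\ref{lemma2} are valid without non-degeneracy, with the same proofs; for Lemma~\ref{lemma1}, one can work on $X'$ and pull back. Hence every point of the open $\Aut(X)$-orbit satisfies $\Gamma(x)=\Eff(X)$. Via Lemma~\ref{lemma2}, this forces suitability of $p(\Sigma)$, resp. the condition of Corollary~\ref{uniform}, which then hold for $X'$. Applying Proposition~\ref{codim2} and Corollary~\ref{uniform} to the non-degenerate variety $X'$ finishes the proof.
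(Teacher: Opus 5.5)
Your proposal is correct and follows the paper's argument. The ``if'' direction is the inclusion $\OOOO'\times T_0\subseteq\OOOO$. The ``only if'' direction uses three ingredients: the $\Aut(X)$-invariance of the condition $\Gamma(x)=\Eff(X)$, Lemma~\ref{lemma1} for degenerate $X$, and $\Cl(X)\cong\Cl(X')$; the non-degenerate results are then applied to $X'$. The paper packages this as the equality $\OOOO=\OOOO'\times T_0$ via Theorem~\ref{main}, which handles both properties at once, whereas you pass through Proposition~\ref{codim2} and Corollary~\ref{uniform}.

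One step needs tightening: ``work on $X'$ and pull back'' for Lemma~\ref{lemma1}. An effective divisor $D$ on $X$ need not be a pullback from $X'$. You should first replace $D$ by the pullback of its restriction to the slice $X'\times\{t_0\}$ through $x=(x',t_0)$. This is legitimate on $X^\reg$: there $D$ is Cartier, the slice is not contained in its support, and restriction inverts $\pi^*$ on class groups. Alternatively, note, as the paper does, that the linearization proof of Lemma~\ref{lemma1} never uses non-degeneracy.
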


\begin{proof}
Notice that the open orbit $\OOOO$ of the group $\Aut(X)$ on $X$ contains the subset $\OOOO'\times T_0$, where $\OOOO'$ is the open orbit of $\Aut(X')$ on $X'$. It suffices to prove that $\OOOO=\OOOO'\times T_0$.

Since $\OOOO$ is $T_0$-invariant, it has a form $U\times T_0$, where $U$ is a subset of $X'$ containing~$\OOOO'$. Then for any 
$x=(x',t_0)\in U\times T_0$ the monoid of classes of effective divisors on $X$ whose support does not contain $x$ coincides with the monoid of all effective classes on $X$.  

We claim that Lemma~\ref{lemma1} holds for degenerate toric varieties as well. Indeed, its proof in \cite[Lemma~6]{Ar} is based on linearization of line bundles from~\cite[Theorem~4.2.2.5]{ADHL},  which works also for varieties with nonconstant invertible global functions. So any effective divisor on $X$ whose support does not contain $x$ is linearly equivalent to a $T$-invariant effective divisor on $X$ with the same property. The latter are precisely the divisors of the form $D'\times T_0$, where $D'$ is an effective $T'$-invariant divisor on $X'$ and the support of $D'$ does not contain~$x'$. Taking into account the natural identification $\Cl(X'\times T_0)\cong\Cl(X')$, we conclude that $\Gamma(x')=\Eff(X')$ on the non-degenerate toric variety $X'$. By Theorem~\ref{main}, we obtain $x'\in\OOOO'$, and so $\OOOO=\OOOO'\times T_0$. 
\end{proof}


\section{Proofs of main results}
\label{sec7}


We are ready to prove the results formulated in the Introduction. 

\begin{proof}[Proof of Proposition~\ref{prpr}] 
The assertion follows from Proposition~\ref{codim2}, Proposition~\ref{use}, and Corollary~\ref{uniform}. 
\end{proof} 

\begin{proof}[Proof of Theorem~\ref{thth}] 
By Proposition~\ref{codim2}, we should describe complete fans $\Sigma$ of toric varieties $X$ whose configurations $p(\Sigma)$ are suitable. The corresponding configurations are given as columns of matrices (1)-(6) in Corollary~\ref{3DCSC.co}.  Using the technique of bunches of cones from~\cite{BH}, one easily shows the following.

\smallskip

{\it Claim 1.}\ In Cases (1)-(4) and (6), there is exactly one complete fan whose primitive ray generators are the columns of the corresponding matrices. Cases~(1) and~(2) correspond to weighted projective 3-spaces $\PP(1,1,a,b)$ with $1\le a<b$ and $\PP(a,a,b,b)$ with $1\le a\le b$, $(a,b)=1$, respectively; see \cite[Section~2.2]{Fu}. Case~(4) represents $\PP^1\times\PP(1,1,a)$ with $1\le a$ and Case~(6) represents $\PP^1\times\PP^1\times\PP^1$. Case~(3) gives a variety $\PP(a,a,b,b)[d]$ with $1\le a\le b$, $(a,b)=1$, and $d\ge 2$, which is obtained from $\PP(a,a,b,b)$ as a quotient by an action of the cyclic group~${\ZZ/d\ZZ}$. Such varieties are called fake weighted projective spaces; see~\cite[Exercise~5.1.13 and Remark~15.4.3]{CLS} or \cite{Kas}. Notice that $\PP(a,a,b,b)[d]$ are the only complete toric threefolds that are homogeneous in codimension one and have torsion in the divisor class group. 

\smallskip

{\it Claim 2.}\ Case~(5) corresponds to three toric varieties $Y'(a,b)$, $Y''(a,b)$, and $Y(a,b)$ with $1\le a\le b$. They are given by complete fans $\Sigma'(a,b)$, $\Sigma''(a,b)$, and $\Sigma(a,b)$, respectively. 

\begin{figure}[ht]

\centering

\begin{tikzpicture}[scale=1.15, >=stealth]

\begin{scope}[xshift=0cm, yshift=0cm]

\coordinate (O)  at (0, 0);
\coordinate (L)  at (-2.1, 1.8);
\coordinate (M)  at (-0.75, 1.45);
\coordinate (T)  at (0.65, 2.35);
\coordinate (R)  at (2.0, 1.8);

\fill[gray!25] (L) -- (T) -- (R) -- (M) -- cycle;
\fill[gray!40] (O) -- (L) -- (M) -- cycle;
\fill[gray!55] (O) -- (M) -- (R) -- cycle;

\draw[->, very thick] (O) -- (L);
\draw[->, very thick] (O) -- (M);
\draw[->, very thick] (O) -- (T);
\draw[->, very thick] (O) -- (R);
\draw[->, very thick] (O) -- (0, -2.0);

\end{scope}


\begin{scope}[xshift=6cm, yshift=-1cm]

\draw[->, thick] (-0.3, 0) -- (4.3, 0);
\draw[->, thick] (0, -0.3) -- (0, 3.2);

\fill (0, 0) circle (1.2pt);

\draw[->, very thick] (0, 0) -- (1.5, 0);
\node[below] at (2.0, -0.02)
{\scriptsize $a_1 = a_4 = (1, 0)$};

\draw[->, very thick] (0, 0) -- (0, 1.4);
\node[left, align=center] at (0.1, 1.3)
{\scriptsize
    $a_2 = a_5 = (0, 1)$
};

\draw[->, very thick] (0, 0) -- (1.7, 1.7);
\node[right] at (1.7, 1.6) {\scriptsize $a_3 = (a, b)$};

\draw[dashed] (1.7, 1.7) -- (2.9, 2.9);

\node at (1.05, 2.9) {$Y'(a, b)$};
\node at (3.7, 2.5) {$Y(a, b)$};
\node at (3.2, 1.0) {$Y''(a, b)$};

\draw[->, thick, bend right=18] (3.1, 2.55) to (2.5, 2.5);
\end{scope}
\end{tikzpicture}
\caption{Gale dual configurations for varieties $Y'(a,b)$, $Y''(a,b)$, and $Y(a,b)$.}
\label{fig}
\end{figure}

If we enumerate the columns of matrix~(5) as $p_1,\ldots, p_5$, then the set of maximal cones of each of these three fans includes
$$
\cone(p_1, p_2, p_3), \quad \cone(p_1, p_3, p_5), \quad \cone(p_2, p_3, p_4), \quad \cone(p_3, p_4, p_5). 
$$ 
The set of maximal cones of $\Sigma'(a,b)$ also includes 
$$
\cone(p_1, p_2, p_4) \quad \text{and} \quad \cone(p_1, p_4, p_5),
$$
the set of maximal cones of $\Sigma''(a,b)$ also includes  
$$
\cone(p_1, p _2, p_5) \quad \text{and} \quad \cone(p_2, p_4, p_5), 
$$
and the set of maximal cones of $\Sigma(a,b)$ also includes $\cone(p_1, p_2, p_4, p_5)$. In particular,  $Y(a,b)$ are the only non-simplicial complete toric threefolds which are homogeneous in codimension~one. It also follows that the varieties $Y'(a,b)$ and $Y''(a,b)$ are isomorphic if and only if $a=b$. 

By \cite[Corollary~9.3]{BH}, all varieties in Theorem~\ref{thth} are projective. The claim concerning non-uniform cases follows from Theorem~\ref{3DCAM.th} and Corollary~\ref{uniform}. The last claim about smoothness can be checked using Lemma~\ref{lreg}.   
\end{proof} 

\begin{example}
Let us take a closer look at the varieties $Y'(1,b)$, $Y''(1,b)$, and $Y(1,b)$. The authors are grateful to Constantin Shramov for discussing this example. 
The variety $Y'(1,b)$ is isomorphic to the projective bundle $\PP(\OOO_{\PP^1}\oplus\OOO_{\PP^1}\oplus\OOO_{\PP^1}(b))$ with projection $\pi'\colon Y'(1,b)\to\PP^1$; see~\cite[Example~7.3.5]{CLS}. The variety $Y''(1,b)$ is isomorphic to $Y'(1,b)$ if $b=1$ and it is singular otherwise. It also admits a surjection $\pi''\colon Y''(1,b)\to\PP^1$.  Since the fans of $Y'(1,b)$ and $Y''(1,b)$ are obtained from the two possible subdivisions of the non-simplicial cone in the fan of $Y(1,b)$, we get the following flip diagram: 
$$
\begin{tikzcd}
 &
Y'(1, b) \ar[swap]{dl}{\pi'} \ar{dr}{\varphi'} & &
Y''(1, b) \ar[swap]{dl}{\varphi''} \ar{dr}{\pi''} &
\\
\PP^1 & & Y(1, b) & & \PP^1
\end{tikzcd}
$$
cf.~\cite[Exercise~3.3.12]{CLS}. The variety $Y(1,b)$ contains a unique singular point $S$. We have
$$
\PP^1 \cong \PP \big( \OOO_{\PP^1}(b) \big) \cong
(\varphi')^{-1}(S) \cong (\varphi'')^{-1}(S),
$$
and the morphisms $\pi'$ and $\pi''$ restricted to these $\PP^1$'s are isomorphisms. Moreover, the maps $\varphi'\colon Y'(1,b)\setminus(\varphi')^{-1}(S)\to Y(1,b)\setminus S$ and $\varphi''\colon Y''(1,b)\setminus(\varphi'')^{-1}(S)\to Y(1,b)\setminus S$ are isomorphisms.

One can check that for $X=Y'(1,b)$ there are three monoids $\Gamma(\sigma_1)=\Gamma(\sigma_2)=\Gamma(\sigma_3)$ different from $\Eff(X)$. Using~\cite[Theorem~3.7]{Ba}, we see that $X$ consists of two $\Aut(X)$-orbits: the open orbit $\OOOO$ and its complement $(\varphi')^{-1}(S)\cong\PP^1$.  
\end{example}


\section{Complete toric varieties with reductive automorphism group}
\label{sec8}


In this section we describe complete toric varieties whose automorphism group is a reductive linear algebraic group, i.e., its unipotent radical is trivial. 
It turns out that this property has a nice interpretation in terms of Gale duality. 

We say that a linear algebraic group $G$ is \emph{locally isomorphic} to $\SL(m_1)\times\ldots\times\SL(m_k)$ if there is a finite central subgroup $F$ in 
$\SL(m_1)\times\ldots\times\SL(m_k)$ such that 
$$
G\cong(\SL(m_1)\times\ldots\times\SL(m_k))/F. 
$$
In other words, $G$ is an intermediate group between ${\SL(m_1)\times\ldots\times\SL(m_k)}$ and \linebreak
$\PSL(m_1)\times\ldots\times\PSL(m_k)$.

With any multiset $\AAA=\{a_1^{(m_1)},\ldots,a_k^{(m_k)}\}$ in an abelian group $A$, where $a_1,\ldots,a_k$ are pairwise distinct, one associates the subset $V$ in $A$ which consists of the same elements $a_1,\ldots,a_k$, but with multiplicity one each.

\begin{definition}
A subset $V$ in an abelian group $A$ is called \emph{minimal} if for any $a\in V$ the monoid in $A$ generated by $V\setminus\{a\}$ does not contain $a$. 
\end{definition}

Denote by $V(\Sigma)$ the subset associated with the multiset $\AAA(\Sigma)$ corresponding to a toric variety $X$ with fan $\Sigma$. 

\begin{proposition} \label{pred}
The automorphism group $\Aut(X)$ of a complete toric variety $X$ is reductive if and only if the subset $V(\Sigma)$ is minimal. In this case, the semisimple part of the group $\Aut(X)$ is locally isomorphic to
$$
\SL(m_1)\times\ldots\times\SL(m_k), 
$$
where $m_i$ are the multiplicities of elements in the multiset $\AAA(\Sigma )$. 
\end{proposition}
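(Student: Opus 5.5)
We would describe $\Aut(X)$ through the classical structure theory of Demazure and Cox, as in Theorems~\ref{teo1} and~\ref{teo2}. For a complete toric variety, $\Aut(X)$ is a linear algebraic group whose identity component $\Aut^0(X)$ is generated by $T$ and the root subgroups $H_e$, $e\in\RRR(\Sigma)$. Completeness gives convex support, so condition (R2) is automatic and $\RRR(\Sigma)$ is simply the set of $e\in M$ satisfying (R1). The Lie algebra of $\Aut^0(X)$ is the Lie algebra of $T$ plus the root lines $\KK\cdot\partial_e$, and $\RRR(\Sigma)$ plays the role of a (non-reduced in general) root system with respect to $T$. The standard criterion is that $\Aut^0(X)$ is reductive if and only if $\RRR(\Sigma)=-\RRR(\Sigma)$, i.e.\ all roots are \emph{semisimple}. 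One direction holds because a unipotent radical normalized by $T$ is generated by root subgroups $H_e$ with $-e\notin\RRR(\Sigma)$. The other holds because if every root has its negative, the pairs $H_e, H_{-e}$ generate $\SL(2)$-type subgroups, and the group generated by $T$ and these is reductive. I would quote this (Demazure, Cox, Nill) rather than reprove it; since $\Aut(X)/\Aut^0(X)$ is finite, reductivity of $\Aut(X)$ is the same as that of $\Aut^0(X)$.

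The main step is to translate ``$-e\in\RRR(\Sigma)$ for all $e\in\RRR(\Sigma)$'' into Gale-dual language. A root $e$ associated with $\rho_i$ gives, via the relation $\sum_j\pairing{p_j}{e}a_j=0$ used in Lemma~\ref{lemma2}, an expression
\[
a_i=\sum_{j\ne i}\pairing{p_j}{e}\,a_j
\]
with nonnegative coefficients. Conversely, every such expression comes from a unique $e$, by exactness of~(\ref{seq4}).

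Now $-e$ is a root exactly when $e$ vanishes on all $p_j$ with $j\ne i,k$ and $\pairing{p_k}{e}=1$ for a single $k$; that is, $a_i=a_k$. So every root is semisimple if and only if every nonnegative representation of $a_i$ through the other elements is the trivial one $a_i=a_k$ with $a_k$ equal to $a_i$. Such a representation is $a_i=\sum_{j\ne i}c_ja_j$ with $c_j\ge 0$ and $\sum c_j\ge 1$. I would argue that this triviality condition is equivalent to minimality of $V(\Sigma)$.

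Suppose $V$ is not minimal, so some $a\in V$ lies in the monoid generated by $V\setminus\{a\}$. This gives a nonnegative representation of $a$ that does not use any copy of $a$, and hence a non-semisimple root. Conversely, suppose there is a nontrivial representation $a_i=\sum c_ja_j$. Strict convexity of $\AAA$, which holds by completeness, lets us cancel copies of $a_i$ appearing on the right: moving $c\,a_i$ to the left gives $(1-c)a_i=\dots$, and $c\ge 2$, or $c=1$ together with other nonzero terms, would contradict strict convexity. This leaves a representation of $a_i$ in the monoid generated by $V\setminus\{a_i\}$, so $V$ is not minimal. I expect this cancellation step to be the delicate point, and I will check it carefully against strict convexity.

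Finally, when $V$ is minimal the roots are exactly the vectors $e$ with $\pairing{p_i}{e}=-1$ and $\pairing{p_k}{e}=1$ for $a_i=a_k$, $i\ne k$, and zero elsewhere. For each value $a_l$ of multiplicity $m_l$, the rays in its class therefore carry the root system of type $A_{m_l-1}$, namely $\{\epsilon_k-\epsilon_i\}$, with different classes orthogonal. Hence the semisimple part is a quotient of $\SL(m_1)\times\dots\times\SL(m_k)$ by a finite central subgroup. Equivalently, in Cox coordinates $\GL(m_1)\times\dots\times\GL(m_k)$ acts on the blocks of variables of equal degree, and $\Aut^0(X)$ is its image modulo the quasitorus $\Spec\KK[A]$.
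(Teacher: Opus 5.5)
Your proposal is correct and follows the paper's proof. You quote the Demazure--Nill criterion that $\Aut(X)$ is reductive exactly when all Demazure roots are semisimple, identify roots with nonnegative relations $a_i=\sum_{j\ne i}\alpha_j a_j$, note that the semisimple ones are exactly the relations $a_i=a_k$, and read off the root systems of type $A_{m_l-1}$. Your strict-convexity cancellation argument spells out the step from ``every relation has the form $a_i=a_k$'' to minimality of $V(\Sigma)$, which the paper leaves implicit.
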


\begin{proof}
Let us recall that a Demazure root $e\in\RRR(\Sigma)$ is \emph{semisimple} if $-e\in\RRR(\Sigma)$. It is well known that the automorphism group $\Aut(X)$ is reductive if and only if all roots in $\RRR(\Sigma)$ are semisimple; see~\cite{De} and~\cite[Proposition~3.2]{Nill}. 

As we know, Demazure roots are given by expressions $a_i=\sum_{j\ne i} \alpha_ja_j$, where $a_j\in\AAA(\Sigma)$ and $\alpha_j$ are non-negative integers. Such a relation corresponds to a semisimple root if and only if it has the form $a_i=a_j$, and the first assertion follows. 

It also follows that semisimple roots are divided into $k$ parts. The $i$-th part forms a root system of type $A_{m_i-1}$. Therefore, the semisimple part of 
$\Aut(X)$ is locally isomorphic to $\SL(m_1)\times\ldots\times\SL(m_k)$.
\end{proof} 

\begin{remark}
Even if the automorphism group $\Aut(X)$ of a complete toric variety $X$ is not reductive, it follows from the proof of Proposition~\ref{pred} that the semisimple part of the linear algebraic group $\Aut(X)$ is locally isomorphic to $\SL(m_1)\times\ldots\times\SL(m_k)$, where $m_i$ are the multiplicities of elements in the multiset $\AAA(\Sigma)$. This result was originally proved by Demazure~\cite[Proposition~3.3]{De}.  
\end{remark} 

Let us proceed with a simple example. In the case $X=\PP^n$  we have $A=\ZZ$ and ${\AAA(\Sigma)=\{1^{(n+1)}\}}$. The automorphism group $\Aut(\PP^n)$ is the reductive group $\PSL(n+1)$.

Using Theorem~\ref{3DCAM.th} and Proposition~\ref{pred}, one checks that in Theorem~\ref{thth} the varieties with reductive automorphism groups are precisely $\PP^3$, $\PP^2\times\PP^1$, $\PP^1\times\PP^1\times\PP^1$, $\PP(a,a,b,b)$ with $1<a<b$, $(a,b)=1$,  and $\PP(a,a,b,b)[d]$ with $1\le a\le b$, $(a,b)=1$, ${d\ge 2}$. 

It also follows from Proposition~\ref{pred} that the automorphism group of the well-formed weighted projective space $X=\PP(d_0,\ldots,d_n)$ is reductive if and only if the subset $V(\Sigma)$ obtained from the multiset $\AAA(\Sigma)=\{d_0,\ldots,d_n\}$ is minimal in $\ZZ$.


\section{Appendix. Cox rings and root subgroups}
\label{App}


In this appendix we recall basic facts on Cox rings and related quotient presentations of toric varieties, and prove Theorems~\ref{teo1} and~\ref{teo2}.

Recall that the \emph{Cox ring} $C(X)$ of a non-degenerate toric variety $X$ is the polynomial ring $\KK[x_1,\ldots,x_m]$, where $m$ is the number of $T$-invariant prime divisors on $X$. The ring $C(X)$ is graded by the divisor class group $\Cl(X)$ with $\deg(x_i)=[D_i]$, where $[D_i]$
are the classes of the $T$-invariant prime divisors $D_i$ on $X$.  The spectrum $\overline{X}=\Spec C(X)$ is the affine space $\AA^m$, called the \emph{total coordinate space} of the toric variety $X$.  

Further, one defines the \emph{Cox sheaf} $\CCC_X$, a sheaf of $\Cl(X)$-graded rings on $X$, whose ring of global sections is $C(X)$. The relative spectrum $\widehat{X}=\Spec_X\CCC_X$ with the natural projection $q\colon\widehat{X}\to X$ is called the \emph{characteristic space} of $X$. The morphism $q$ is a good quotient with respect to the action of the quasitorus $Q:=\Spec \KK[\Cl(X)]$ defined by the $\Cl(X)$-grading on $\CCC_X$:  
$$
\begin{tikzcd}
\widehat{X} \ar{d}{q} \ar[hookrightarrow]{r} &
\overline{X} = \AA^m
\\
X
\end{tikzcd}
$$
The characteristic space can be embedded as an open $Q$-invariant subset $\widehat{X}\subseteq\overline{X}$, whose complement $Z$ is the zero set of the so-called \emph{irrelevant ideal} $\III(X)$ in $C(X)$. The subset  $Z$ is a union of some coordinate planes of codimension at least~2 in~$\AA^m$. More precisely, for a subset $I\subseteq\{1,\ldots,m\}$ the coordinate plane $\{x_i=0,\, i\in I\}$ is contained in $Z$ if and only if $\cone(p_i, i\in I)$ is not contained in a cone of $\Sigma$.  We have $\overline{X}=\widehat{X}$ if and only if $X$ is affine.  Notice that the linear action of the torus $\TT$ of all diagonal matrices on $\AA^m$ preserves the subset $\widehat{X}$. Further, the torus $\TT/Q$ acts on the quotient space $X$ and is identified with  the acting torus $T$. We refer to~\cite{Cox} and \cite[Sections~1.6,~2.1]{ADHL} for details on this construction. 

\begin{proof}[Proof of Theorem~\ref{teo1}.] 
It is known that any $\Ga$-action on $X$ can be lifted uniquely to a $\Ga$-action on $\overline{X}$. The lifting preserves the open subset $\widehat{X}$ and commutes with the action of the quasitorus $Q$; see~\cite[Theorem~4.2.3.2]{ADHL}. The lifting of a $T$-root subgroup $H$ of $\Aut(X)$ is a $\TT$-root subgroup $\HH$ of $\Aut(\overline{X})$.  Conversely, any $\TT$-root subgroup $\HH$ of $\Aut(\overline{X})$, which commutes with $Q$ and preserves the subset $\widehat{X}$, descends to a $T$-root subgroup $H$ of $\Aut(X)$. 

It is well known that $\TT$-root subgroups $\HH$ on $\AA^m$ are in bijection with homogeneous locally nilpotent derivations 
$$ 
\partial= x_1^{a_1}\ldots \widehat{x}_i \ldots x_m^{a_m}\frac{\partial}{\partial x_i} 
$$
of the polynomial ring $\KK[x_1,\ldots,x_m]$ equipped with the standard $\ZZ^m$-grading; cf.~\cite[Section~1.5.1]{Fr}. Such a subgroup $\HH$ commutes with $Q$ if and only if the derivation $\partial$ has degree zero with respect to the $\Cl(X)$-grading on $\KK[x_1,\ldots,x_m]$. 
This condition is equivalent to  $[D_i]=\sum_{j\ne i} a_j[D_j]$. In turn, this is equivalent to existence of a vector $e=e(\HH)\in M$ with ${\langle p_i, e\rangle=-1}$ and ${\langle p_j, e\rangle=a_j\ge 0}$ for all $j\ne i$; 
see~\cite[Section~4.1]{CLS} or \cite[Lemma~3.3, Proposition~3.4]{Fu}. This is precisely condition (R1) in Definition~\ref{defdr}. 

Now we check when the subgroup $\HH$ corresponding to $\partial$ preserves the open subset $\widehat{X}$ in $\AA^m$. The subgroup $\HH$ fixes all coordinates $x_1,\ldots,x_m$ except $x_i$ and sends $x_i$ to ${x_i+\alpha x_1^{a_1}\ldots\widehat{x}_i\ldots x_m^{a_m}}$ with $\alpha\in\KK$.  

Let $I(\HH)\subseteq\{1,\ldots,\widehat{i},\ldots,m\}$ be the set of those $j$ where $a_j>0$.  Equivalently, for $j \ne i$,  $j\notin I(\HH)$ if and only if $\langle p_j, e\rangle=0$.  Observe that $\TT$-orbits on $\AA^m$ are of the form $O(J)$, where $J\subseteq\{1,\ldots,m\}$ and $O(J)$ is the set of points $(x_1,\ldots,x_m)$ with $x_j\ne 0$ if and only if~$j\in J$. 

It is clear that $I(\HH)\nsubseteq J$ if and only if the $\TT$-orbit $O(J)$ consists of $\HH$-fixed points. The remaining $\TT$-orbits form pairs $(O(J'), O(J''))$, where $I(\HH)\cup\{i\}\subseteq J'$ and $J''=J'\setminus\{i\}$,  and the subgroup $\HH$ connects these two $\TT$-orbits.  Moreover, the closure of the $\TT$-orbit $O(J'')$ is a $\TT$-invariant prime divisor in the closure of the $\TT$-orbit $O(J')$. In particular, an open $\TT$-invariant subset $U$ in $\AA^m$ is $\HH$-invariant if and only if, whenever $U$ contains $O(J')$ from a pair $(O(J'), O(J''))$, it also contains $O(J'')$. 

Notice that the $\TT$-orbit $O(J)$ is contained in the characteristic space $\widehat{X}$ if and only if $\cone(p_k, k\notin J)$ is contained in a cone from $\Sigma$. So the subset $\widehat{X}$  is $\HH$-invariant if and only if  for any cone $\sigma\in\Sigma$ with $e|_{\sigma}\equiv 0$, where $e=e(\HH)$, we have $\cone(\sigma, p_i)\in \Sigma$. This is precisely condition (R2) in Definition~\ref{defdr}.  

We conclude that a $\TT$-root subgroup $\HH$ on $\AA^m$ descends to a $T$-root subgroup $H$ on $X$ if and only if the corresponding vector $e=e(\HH)$ is a Demazure root of the fan $\Sigma$. This completes the proof of Theorem~\ref{teo1}. 
\end{proof}

\begin{proof}[Proof of Theorem~\ref{teo2}.] 
We use notation from the proof of Theorem~\ref{teo1}. Denote by $\HH_e$ the lifting to $\AA^m$ of the $T$-root subgroup $H_e$. We are going
to deduce the desired results on the action of $H_e$ on $X$ from properties of the lifted action of $\HH_e$ on $\AA^m$.  

As we have observed in the proof of Theorem~\ref{teo1}, a $\TT$-orbit $O(J)$ on $\AA^m$ consists of $\HH_e$-fixed points if and only if $I(\HH_e)\nsubseteq J$. The orbit $O(J)$ projects to the $T$-orbit $O(\sigma)$ on $X$, where $\sigma\in\Sigma$ is the unique cone whose relative interior meets the relative interior of $\cone(p_i, i \notin J)$. The condition $I(\HH_e)\nsubseteq J$ means that there is $p_k\in\sigma$ such that $\langle p_k, e\rangle>0$. We conclude that any $T$-orbit 
$O(\sigma)$, such that $e|_{\sigma}$ takes a positive value, consists of $H_e$-fixed points.

All other $T$-orbits $O(\sigma)$ are of two types, namely, with $e|_{\sigma}=0$ and with $\sigma=\cone(\sigma',\rho_e)$, where $e|_{\sigma'}=0$. 
By Definition~\ref{defdr}, such orbits are divided into pairs $(O(\sigma'), O(\sigma''))$ with $e|_{\sigma'}=0$ and $\sigma''=\cone(\sigma',\rho_e)$. 
The preimages of these orbits under $q$ contain the $\TT$-orbits $O(J')$ and $O(J'')$, respectively, where $I(\HH_e)\cup\{i\}\subseteq J'$ and
$J''=J'\setminus\{i\}$. 

The subgroup $\HH_e$ acts on $\AA^m$ as
$$
x_i\mapsto x_i+\alpha x_1^{a_1}\ldots\widehat{x}_i\ldots x_m^{a_m},\quad\alpha\in\KK, \quad x_j\mapsto x_j, \quad j\ne i,
$$
where $p_e=p_i$. This shows that any $\HH_e$-orbit on $O(J')\cup O(J'')$ intersects $O(J'')$ in one point. A lifting of the one-parameter subgroup $\lambda_e$ in $T$ to $\TT$ is the $i$th coordinate subtorus $\TT_i$ acting on $\AA^m$ by $x_i\mapsto tx_i$ with $t\in\KK^{\times}$ and $x_j\mapsto x_j$ for $j\ne i$. It follows that the orbits of $\TT_i$ on $O(J')$ are precisely the intersections of $\HH_e$-orbits with $O(J')$. Applying the projection~$q$, we conclude that each $H_e$-orbit in $O(\sigma')\cup O(\sigma'')$ intersects $O(\sigma')$ in a one-dimensional orbit of the subtorus $\lambda_e$, while the same $H_e$-orbit meets $O(\sigma'')$ in a single point. Theorem~\ref{teo2} is proved. 
\end{proof} 

\begin{remark}
It follows from Proposition~\ref{codim2} that a non-degenerate toric variety $X$ is homogeneous in codimension one if and only if the group $\GG(X)$ generated by the liftings $\HH_e$ of all root subgroups $H_e$ in $\Aut(X)$ acts on the total coordinate space $\AA^m$ with an open orbit~$\OO$. Indeed, if the set 
$\RRR_{\rho_i}(\Sigma)$ is empty for some $\rho_i\in\Sigma(1)$, the group $\GG(X)$ does not change the coordinate $x_i$ on $\AA^m$, so it has no open orbit. 
Conversely, the one-dimensional coordinate subtori $\TT_1,\ldots,\TT_m$ generate the torus $\TT$. If all the sets $\RRR_{\rho_i}(\Sigma)$ are non-empty, we can apply shifts along the orbits of  $\TT_1,\ldots,\TT_m$ in the open $\TT$-orbit in $\AA^m$ (see the proof of Theorem~\ref{teo2}) and conclude that the open $\TT$-orbit in $\AA^m$ is contained in one $\GG(X)$-orbit $\OO$. 

Observe that if a group $\GG$ generated by some $\Ga$-subgroups acts on $\AA^m$ with an open orbit $\OO$, then this orbit is big, i.e., the complement of $\OO$ contains no divisor. Indeed, if $D$ is a prime divisor in the complement,  then $D$ is $\GG$-invariant. But any divisor on $\AA^m$ is principal, so $D=\text{div}(f)$ for some polynomial $f$ on $\AA^m$. Then $f$ is semi-invariant for any $\Ga$ -subgroup in $\GG$; see~\cite[Theorem~3.1]{PV}. Since the group $\Ga$ has no non-trivial character, the function $f$ is $\GG$-invariant, a contradiction with the existence of an open orbit. 

This observation implies that a non-degenerate toric variety $X$ is homogeneous in codimension one if and only if the group $G(X)$ generated by all root subgroups $H_e$ in $\Aut(X)$ acts on $X$ with a big open orbit. At the same time, one can easily construct projective toric surfaces $X$ such that the group $G(X)$ acts on $X$ with an open orbit that is not big.
\end{remark}


{}
\end{document}